\title{Multilevel ensemble Kalman filtering}
\author{H{\aa}kon Hoel\thanks{Department of Mathematics, University of Oslo, Norway
  (haakonah@math.uio.no), and Applied Mathematics and Computational Sciences, KAUST,
  Thuwal, Saudi Arabia}
\and
Kody J. H. Law\thanks{Computer Science and Mathematics Division, Oak Ridge National Laboratory, Oak Ridge, TN, USA (lawkj@ornl.gov)} %Applied Mathematics and Computational Sciences, KAUST, Thuwal, Saudi Arabia.  }
\and
Raul Tempone\thanks{Applied Mathematics and Computational Sciences, KAUST, Thuwal, Saudi Arabia (raul.tempone@kaust.edu.sa)}
}
\begin{document}

\maketitle

\begin{abstract}
  \kl{
  This work embeds a multilevel Monte Carlo (MLMC) sampling strategy
  into the Monte Carlo step of the ensemble Kalman filter (EnKF) in
  the setting of finite dimensional %time dependent unobservable
  signal evolution and %finite dimensional 
  noisy discrete-time observations.  The
  %unobservable 
  signal dynamics is assumed to be governed by a
  stochastic differential equation (SDE), and a hierarchy of time grids is
  introduced for multilevel numerical integration of that 
  SDE. The resulting multilevel ensemble Kalman filter method (MLEnKF)
  is proved to asymptotically outperform EnKF in terms of
  computational cost vs.~approximation accuracy. The theoretical
  results are illustrated numerically.
 }
 
   \bigskip
   \noindent \textbf{Key words}: Monte Carlo, multilevel, filtering, Kalman filter, ensemble Kalman filter.
%   \bigskip
   
   \noindent \textbf{AMS subject classification}:
   65C30, 65Y20. 
\end{abstract}

%\maketitle

\pagestyle{myheadings}
\thispagestyle{plain}
\markboth{Multilevel ensemble Kalman filtering}{Multilevel ensemble Kalman filtering}

%%%%%%%%%%%%%%%%%%%%%%%%%%%%%%%%%%%%%%%

%%%%%%%%%%%%%%%%%%%%%%%%%%%%%%%%%%%%%%% introduction_v0

%\input{introduction_v0}

\section{Introduction}
\label{sec:intro}

Filtering refers to the sequential estimation of the state $u$ and/or parameters $p$ of a system through sequential 
incorporation of online data $y$. The most complete estimation of the state $u_n$ at time $n$ is given by its 
probability density conditional on the observations up to the given time $\pi(u_n|y_1,\ldots, y_n)$ \cite{jaz70, BC09}.   
For linear Gaussian systems the analytical solution may be given in closed form, 
via an update formulae for the mean and covariance known as the Kalman filter \cite{kalman1960new}.
However, in general there is no closed form solution.  One must therefore resort to either algorithms which approximate 
the probabilistic solution by leveraging ideas from control theory %, as in the data assimilation community 
\cite{kal03,jaz70}, or Monte Carlo methods to approximate the filtering distribution itself 
\cite{BC09, doucet2000sequential, del2004feynman}.  The ensemble Kalman filter (EnKF) 
\cite{burgers1998analysis, evensen2003ensemble} combines elements of both approaches.
In the linear Gaussian case, it converges to the Kalman filter solution \cite{mandel2011convergence}, and even 
in the nonlinear case, under suitable assumptions it converges \cite{le2011large, law2014deterministic} to what one may argue is the optimal filter
among those which incorporate the data linearly \cite{law2014deterministic, luenberger1968optimization, pajonk2012deterministic}.
\kl{ In the case of spatial models approximated on a numerical grid, }
the state space itself may become very high-dimensional and even the linear solves may become intractable.
Therefore, one may be inclined to use the EnKF filter even for linear Gaussian problems in which the solution is
intractable despite being given in closed form on paper by the Kalman filter.

For problems which admit hierarchies of approximations with cost inversely proportional to accuracy, 
it is natural to leverage solutions to less expensive and less accurate approximations in order to accelerate 
the convergence of the more expensive and more accurate approximations.  
This idea originated in the iterative solution of numerical approximation of elliptic PDE as early as \cite{fedorenko1961relaxation}.
In the context of iterative solution of PDE, the methodology, which may be used both for solution as well
as pre-conditioner, has become known as multigrid -- see \cite{hackbusch1985multi} for a general reference.  
The same idea may be applied in the context of Monte Carlo approximation of random fields 
as proposed in \cite{heinrich2001multilevel}, and later studied in detail in the context of SDE in \cite{Giles08} 
and PDE in \cite{Cliffe11}.  There has been an explosion of recent activity since \cite{Giles08}, 
including for example {~\cite{Giles14,CollierBIT,Hoel14,Chernov14,Mishra12}}.  Even more recently, work is beginning to emerge extending the same 
multilevel framework beyond "vanilla" Monte Carlo to its manifestation in the context of Bayesian inference, 
anywhere that one has a discretization error inversely proportional to cost in the computation of a single 
sample and the Monte Carlo $\cO(M^{-1/2})$ rate of convergence.  Examples in the 
context of Markov Chain Monte Carlo appeared in \cite{ketelsen2013hierarchical, hoang2013complexity}.
To the knowledge of the authors there has yet to be extension of the methodology to the filtering context.  
As a first step, this work explores the extension of the EnKF 
to its multilevel implementation, which is naturally referred to as multilevel EnKF (MLEnKF).  In the case of linear
Gaussian problems, the limiting distribution is the gold-standard Bayesian posterior distribution, while in the 
non-Gaussian case it is something else (see, e.g. \cite{law2014deterministic, luenberger1968optimization, pajonk2012deterministic} and references therein for further discussion).

The rest of the paper will be organized as follows.  In section \ref{sec:kalman} the filtering problem will be introduced,
the Kalman filter and EnKF will be reviewed, 
and the new multilevel EnKF (MLEnKF) will be introduced for the first time in sub-section \ref{subsec:mlenkf}.  
In section \ref{sec:theory} it is proven that indeed the MLEnKF inherits almost the same favorable asymptotic "cost-to-$\varepsilon$" as 
the standard MLMC for a finite time horizon, and its mean-field limiting distribution is the filtering distribution in the linear and
Gaussian case. 
In section \ref{sec:numerics} the theory is illustrated with numerical examples
of the Ornstein-Uhlenbeck stochastic differential equation, and the geometric Brownian motion.
Finally, conclusions and future directions are presented in section
\ref{sec:conclusion}.

%%%%%%%%%%%%%%%%%%%%%%%%%%%%%%%%%%%%%%%%

%%%%%%%%%%%%%%%%%%%%%%%%%%%%%%%%%%%%%%%%%%%%%%kalman_v0

%\input{kalman_v0}

\section{Kalman filtering}
\label{sec:kalman}

Here the filtering problem will be introduced in section \ref{subsec:genFiltering}, and then the Kalman filter will be introduced
for linear Gaussian state-space models in section \ref{subsec:kalmanFiltering}.
In section \ref{subsec:enkf}, its Monte Carlo implementation of the Kalman filter will be introduced, 
which is known as the ensemble Kalman filter (ENKF).
In section \ref{subsec:mlenkf}, a the multilevel implementation is introduced for the first time.

\subsection{General set-up}
\label{subsec:genFiltering}

Let $(\Omega,\mathcal{E}, \bbP)$ be a complete probability space, where 
$\Omega$ is the set \hh{of all possible outcomes}, $\mathcal{E}$ is the sigma algebra generated by $\Omega$
and $\bbP$ is the associated probability measure.
Consider the general stochastic signal evolution for the random variables 
$u_n :\Omega \rightarrow \bbR^d$, with $d<\infty$, 
\begin{equation}\label{eq:psiDefinition}
u_{n+1} = \Psi %_n
(u_n), 
\end{equation}
for $n=0,1,\ldots,N-1$.
Given the history of a noisy signal observation
\[
y_n = H u_n + \eta_n, 
\]
where $H\in\bbR^{m\times d}$ and $\eta_n$ are i.i.d. with $\eta_1 \sim
N(0,\Gamma), \Gamma \in \bbR^{m\times m}$ symmetric positive definite,
\kl{the objective is to track the signal $u_n$ given observations $Y_n =
Y_n^{\rm obs}$ with $Y_n = (y_1, y_2, \ldots, y_n) $ and $Y_n^{\rm
  obs} = (y_1^{\rm obs}, y_2^{\rm obs}, \ldots, y_n^{\rm obs})$ the given observed 
  realization of $Y_n$.  
  In other words, the aim is to approximate the random variable $u_n|(Y_n = Y_n^{\rm obs})$.  
  Notice that under the given assumptions this is a hidden Markov model %.  %That is So, 
  and the density of the random variable we
seek to approximate admits the following sequential structure
\begin{align}
\label{eq:filteringdist}
\pi_{u_n}(u | Y_n=Y_n^{\rm obs}) & = \frac{\pi_{y_n}(y_n^{obs}|u_n = u) \pi_{u_n}(u |Y_{n-1} = Y_{n-1}^{\rm obs})}{\pi_{y_n}(y_n^{\rm obs}|Y_{n-1} = Y_{n-1}^{\rm obs})}, \\
\nonumber
\pi_{u_n}(u | Y_{n-1}=Y_{n-1}^{\rm obs}) & = \int_{\bbR^d} \pi_{u_n}(u |u_{n-1}=v)\pi_{u_{n-1}}(v|Y_{n-1} = Y_{n-1}^{\rm obs}) \, \rD v , \\
\nonumber
\pi_{y_n}(y_n^{obs} |Y_{n-1} = Y_{n-1}^{\rm obs}) & = \int_{\bbR^d} \pi_{y_n}(y_n^{\rm obs}|u_n=u) \pi_{u_n}(u |Y_{n-1} = Y_{n-1}^{\rm obs}) \, \rD u,
\end{align}
Here, $\pi_{X_1}(x_1|X_2 = x_2)$ denotes the marginal probability density 
of $X_1$ conditional that $X_2 = x_2$.}

It will be assumed that $\Psi(\cdot)$ cannot be evaluated exactly, but rather only approximately,
and that there exists a hierarchy of accuracies at which it can be evaluated each with its associated cost.
The explicit dependence on $\omega$ will be suppressed where confusion is not possible.  
In particular, we will be concerned herein with the case in which 
\kl{$u_{n+1}= \Psi %_n
(u_n) := u(1;u(0)=u_n)$ is given by the %finite-time 
evolution at $t=1$ of the following SDE
\begin{equation}\label{eq:sde}
\begin{split}
%du(t+n) & = a(u(t +n) ,t+n) dt + b(u(t+n) ,t+ n) dW(t),
%\qquad  t \in (0, 1), \\
du(t) & = a(u(t)) dt + b(u(t) ) dW(t+n),
\qquad  t \in (0, 1), %\\
%u(0) & = u_n,
\end{split}
\end{equation}
given the prescribed initial condition $u(0)=u_n$, 
where $a:\bbR^d %\times \bbR_+ 
\rightarrow \bbR^d$, 
$b: \bbR^d %\times \bbR_+ 
\rightarrow \bbR^{d \times r}$}, and $W:\Omega \times [0, \infty) \rightarrow \bbR^r$
is an $r-$dimensional %spatially white 
Wiener process.
%noise.  
Furthermore, $a$ and $b$ will satisfy the following 
conditions
\begin{equation}\label{eq:lipcoeffs}
|a(u) - a(v)| + |b(u) - b(v)| \leq c |u-v| ~{\rm for~all}~ u,v \in
\bbR^{d}~{\rm and~some~} c>0. %\times \bbR_+.
\end{equation}
This fits into the framework of \eqref{eq:psiDefinition}.  
Notice that the randomness comes from the initial condition and the Wiener
process $W$ only, and not the coefficients, however it could in principle come also from the latter.
Indeed if the analytical solution is unknown, then the system above must be approximated, leading to the 
hierarchy of approximations to $\Psi%_n
(\cdot)$.  
In particular, 
%we will need to 
denote by $\{\psiL \}_{\ell=0}^\infty$ a hierarchy of approximations to the solution 
$\Psi:=\Psi^{\infty}$ of~\eqref{eq:sde}.  First some assumptions must be made.
 \begin{assumption}  For every $p \geq 2$, 
 the solution operators $\{\psiL\}_{\ell=0}^\infty$ satisfy the following conditions, for
 some $0<c_\Psi<\infty$ depending on $\Psi$:
\begin{itemize} 
\item[(i)] $\|\psiL(u) -\psiL(v) \|_p < c_{\Psi} \|u-v\|_p $, %almost surely.
\item[(ii)] $\|\psiL(u)\|_p^p \leq c_{\Psi} (1+\|u\|_p^p)$,
\end{itemize}
where %we have introduced 
the following notation is introduced $\|u\|_p := \E{\abs{u}^p  }^{1/p}$.
\label{ass:psilip}
\end{assumption}

For many numerical solvers the assumptions can be verified by application 
of Gronwall's inequality, cf.~\cite{GrahamTalay,Carlsson10}.
\kl{For notational simplicity, we %will 
consider the particular case in which 
\eqref{eq:sde} is {\it autonomous}, such that the coefficients on the right-hand side do not depend
explicitly on time.  
Note that the results easily extend to the
non-autonomous case, provided the given assumptions
on $\Psi$ are uniform with respect to $\{\Psi_n\}_{n=1}^N$.} 
The specialization is merely for notational convenience.

\subsection{Linear Gaussian case}
\label{subsec:kalmanFiltering}

Consider the linear instance of~\eqref{eq:psiDefinition}, in which
\begin{equation}\label{eq:psilinear}
%\Psi(v_n) 
\Psi(u_n) = A u_n + \xi_n,
\end{equation}
where $A\in \bbR^{d\times d}$, and $\xi_n$ are i.i.d. normal random variables with 
$\xi_1 \sim N( 0 , \Sigma) , \Sigma \in \bbR^{d\times d}$ symmetric positive definite. 
This case arises when the coefficients of \eqref{eq:sde} are given by
\begin{equation}
\label{eq:linnonaut}
a(u) = a_1%(t)
u + a_0%(t) 
\quad {\rm and} \quad b(u)%,t) 
= b_0.%(t).%$$ 
\end{equation}
Again we suppress the possible time-dependence of the random maps 
$\Psi$ and matrices $A$ and $\Sigma$ just to simplify notation. 
For this class of problems, given a Gaussian initial condition, 
the filtering distribution \eqref{eq:filteringdist} is known to
be Gaussian, and is therefore defined uniquely by its mean and covariance.
\kl{ Kalman filtering provides a two step iterative procedure for computing the %reference 
mean and covariance of $\hhv_{n+1}:=u_{n+1}|(Y_{n+1} = Y_{n+1}^\obs)$ given
$\hhv_{n} := u_n|(Y_n = Y_n^{\obs}) \sim N(\hhm_{n}, \hhc_{n})$ where 
\[
\hhm_{n} := \E{u_n|Y_n = Y_n^\obs } \quad  \text{and} \quad  \hhc_{n} := \E{ (u_{n} - \hhm_{n}) (u_{n} - \hhm_{n})^\transpose | Y_n =Y_n^{\obs}}.  
\]
The classical Kalman filter consists of a two-step formula 
which maps the distribution of $\hhv_{n}$ %=u_n|(Y_n=Y_n^\obs)$ 
to the distribution of $\hhv_{n+1}$.
In particular, the map from $(\mean{n} , \widehat{C}_{n})$ to $(\mean{n+1} , \widehat{C}_{n+1})$ 
is described by the following two prediction equations and two update equations }%the four equations
\begin{align}\label{eq:updateKf}
 \nonumber
 \meanHat{n+1} & =  A \mean{n},				 &\covHat{n+1} & =  A\widehat{C}_{n} A^\transpose + \Sigma\\
 \nonumber
 \mean{n+1} & =  (I - K_{n+1} H ) \meanHat{n+1} + K_{n+1} y_{n+1}^\obs,  	& \widehat{C}_{n+1}     & =  (I - K_{n+1} H ) \covHat{n+1}.
\end{align}
where \kl{the \it{Kalman gain}} $ K_{n+1}$ is given by
 \begin{align}
  \nonumber
K_{n+1} &= \covHat{n+1} H^\transpose S_{n+1}^{-1}, & S_{n+1} & = \Gamma + H\covHat{n+1} H^\transpose .
\end{align}

\begin{remark}
When observations are in a lower dimension than the true signal, the iterations are more 
efficiently computed by introducing $d_{n+1} = y_{n+1}^\obs - H \meanHat{n+1}$ and 
updating the mean as follows 
\[
 \mean{n+1}    = \meanHat{n+1} + K_{n+1} d_{n+1}.
\]
It is nonetheless instructive to observe the alternative form presented above, %\eqref{eq:analmean}, 
as it is the properties of the operators  $I-K_{n+1}H$ which are responsible for stability of the algorithm
\cite{lawstuartDA2015}.
\end{remark}

\subsection{EnKF}
\label{subsec:enkf}

EnKF uses an ensemble of particles to estimate means and covariance
matrices appearing in the Kalman filter, however the framework can be
generalized to non-Gaussian models. Let $\vHat{n,i}$ and 
$\widehat{v}_{n,i}$ 
respectively denote the prediction and update of
the $i$-th particle at simulation time $t_n=n$.  
One EnKF two-step transition consists not of the propagation of a mean and covariance as in the 
original Kalman filter, 
but instead the propagation of an ensemble $\{\widehat{v}_{n,i}\}_{i=1}^M  \mapsto \{\widehat{v}_{n+1,i}\}_{i=1}^M$.
\footnote{Due to the implicit linear and Gaussian assumptions underlying the formulation, 
one may determine that it is reasonable to summarize the ensemble in its sample mean and 
covariance and indeed this is often done.  In this case, one may  construct a Gaussian from 
the empirical statistics and resample from that.}
This procedure consists nonetheless in the prediction and update steps. %and the analysis step. 
In the prediction step, 
$M$ particle paths are computed over one interval, i.e., 
 \begin{equation}
  \vHat{n+1}(\omega_i) = \Psi(\widehat{v}_{n}(\omega_i),\omega_i) %\quad  (= A v_{n,i} + \xi_{n,i}), \qquad 
   \end{equation}
for $i=1, \ldots, M$,  where $v_{n} (\omega_i) := v_{n,i}$ %, \xi_{n,i} = \xi_{n}(\omega_i)$ 
denotes a realization corresponding to the event sample $\omega_i$ of the random variable 
$v_{n} %, \xi_{n} 
: \Omega \rightarrow \bbR^d$, %\cB(\bbR)$.
and $ \Psi(\cdot,\omega_i)$ signifies the corresponding realization of the map for a given initial condition.
Indeed the notation for random variable realizations, e.g. $\xi_{n,i}$ and $\xi_{n}(\omega_i)$, will be used interchangeably 
where confusion is not possible.  The impetus for introduction of the latter notation will become apparent in the next section.
For this presentation it suffices to assume a single infinite precision map, however there indeed may also be 
numerical approximation errors, i.e. $\Psi^L$ may be used in place of $\Psi$ for some satisfactory
resolution $L$.
\kl{The prediction step is completed by using the particle paths to compute sample mean and covariance:
  \[
  \begin{split}
  \meanHatMC{n+1}  		& = E_M[\vHat{n+1}] \\
  \covHatMC{n+1}   		& = \cov_M[\vHat{n+1}] 
  \end{split}
  \]\
  where the following notations are introduced 
\begin{equation}\label{eq:sampleAvg}
E_M[v] \coloneq \frac{1}{M} \sum_{i=1}^M  v(\omega_{i} ),
\end{equation}
\begin{equation}\label{eq:sampleCov}
\cov_M[u,v] \coloneq %\frac{M}{M-1}
E_M[u v^\transpose] - E_M[u]  \big(E_M[v]\big)^\transpose,
\end{equation}
as well as the shorthand $\cov_M[u] \coloneq \cov_M[u,u]$.
}
The update step consists of computing (1) auxillary matrices
\[
 S^{\rm MC}_{n+1}            = H\covHatMC{n+1} H^\transpose + \Gamma \text{ and }   K^{\rm MC}_{n+1}   = \covHatMC{n+1} H^\transpose (S^{\rm MC}_{n+1})^{-1}, 
\]
and (2) measurement corrected particle paths for $i=1,2, \ldots, M$,
 \[ 
 \begin{split}
  \yTilde{n+1,i}	 & = y_{n+1}^\obs + \eta_{n+1,i},\\
  \widehat{v}_{n+1,i} %(\omega_i)  		 
  & = (I - K^{\rm MC}_{n+1}H) \vHat{n+1,i}
  %(\omega_i) 
  + K^{\rm MC}_{n+1} \yTilde{n+1,i}, 
\end{split}
\]
where %the sequence 
$\{ \eta_{n+1,i} \}_{i=1}^M$ are i.i.d.~with $\eta_{n+1,1} \sim N(0, \Gamma)$. 
This last procedure may appear somewhat ad-hoc.  Indeed it was originally introduced %and explained 
in \cite{burgers1998analysis} to correct the statistical error induced in its absence in implementations 
following the original formulation of the ensemble Kalman filter in \cite{evensen1994sequential}.
It has become known as the perturbed observation implementation.
Due to the form of the update, all ensemble members are correlated to one another after 
the first update.  So, %even in the linear Gaussian case \eqref{eq:psilinear}, 
the ensemble is no longer Gaussian after the first update.  
The measurement corrected sample mean and covariance, which need not be computed,
%are 
would be given by:
\[
  \begin{split}
  \meanMC{n+1}  		& = E_M[\widehat{v}_{n+1}],\\
  \covMC{n+1}   		& = \cov_M[\widehat{v}_{n+1}]. 
  \end{split}
\]

\kl{The sample empirical distribution is defined by 
\begin{equation}
\mu_n^{\rm MC} = \frac1M \sum_{i=1}^M \delta_{v_n(\omega_{i})},
\label{eq:emp}
\end{equation}
and, for $\varphi:\bbR^d\rightarrow\bbR$, the following shorthand notation is introduced
$\mu_n^{\rm MC} (\varphi) = \int \varphi d\mu_n^{\rm MC} = E_M[\vHat{n}]$.
It was shown in \cite{mandel2011convergence, le2011large}
that if $\Psi$ is of the form \eqref{eq:psilinear} %is linear 
and $\bbE|v_0|^p <\infty$ for all $p\geq 2$, then %under Assumption \ref{ass:psilip},
for all Lipschitz $\varphi:\bbR^d \rightarrow\bbR$ and all $p\geq 2$,
\begin{equation}
\left(\bbE |\mu_n^{\rm MC}(\varphi) - \mu_n(\varphi) |^p\right)^{1/p} \lesssim M^{-1/2},
\label{eq:empconv}
\end{equation}
where $\mu_n$ is the filtering distribution.  
The notation $f(M) \lesssim g(M)$ here is used to denote $f(M) = \cO(g(M)).$
}

\subsection{Multilevel EnKF}
\label{subsec:mlenkf}

MLEnKF computes particle paths on a hierarchy of accuracy levels, in
this case given by increasing refinement of the temporal
discretization.  
Let $\vHatL{\ell}{n}$, $\vL{\ell}{n}$ respectively denote the
prediction and update of a particle on solution level $\ell$ at
simulation time $t_n$.  
A solution on level $\ell$ is computed by the numerical integrator $\vHatL{\ell}{n+1} = \psiL(\vL{\ell}{n})$.
Furthermore, let the \kl{difference} operator 
for level $\ell$ be given by 
\begin{equation}\label{eq:dlDef}
\DlVHat{n}(\omega)  := 
\begin{cases} 
\vHatL{0}{n} (\omega), & \text{if } \ell =0,\\
\vHatL{\ell}{n}(\omega) - \vHatL{\ell-1}{n} (\omega)  , & \text{else if } \ell >0.
\end{cases}
\end{equation}
Then the transition from approximation of the distribution of {$\hhv_n$ }%= u_n|(Y_n=Y^\obs_n)$} 
to the distribution of {$\hhv_{n+1}$} %= u_{n+1}|( Y_{n+1} = Y_{n+1}^\obs)$} 
in the MLEnKF framework consists of the predict/update step of generating {\it pairwise coupled} particle realizations 
on a set of levels $\ell=0,1,\ldots, L$.  However, it is important to note that here one has 
correlation between pairs and also between levels due to the update, 
unlike the standard MLMC in which one has i.i.d. pairs.  This point will be very important,
and we return to it in the following section.

Similarly to the standard EnKF, the MLEnKF transition is between {\it multilevel} ensembles \kl{
$\{[\hhv^{\ell}_{n}(\omega_{\ell,i}), \hhv^{\ell-1}_{n}(\omega_{\ell,i})]_{i=1}^{M_\ell}\}_{\ell=0}^L \mapsto 
\{[\hhv^{\ell}_{n+1}(\omega_{\ell,i}), \hhv^{\ell-1}_{n+1}(\omega_{\ell,i})]_{i=1}^{M_\ell}\}_{\ell=0}^L$, 
with the convention that $\hhv^{-1}_{k}:=0$ for all $k$ for ease of notation.}
This consists, as for EnKF, of the predict and update steps. %and analysis step, 
In the predict step, particle paths are first computed on a hierarchy of levels. That is, 
the particle paths are computed one step forward by
\begin{equation}\label{eq:DlvHatDef}
\begin{split}
  \vHatL{\ell-1}{n+1}(\omegaLI)  	& = \Psi^{\ell-1}(\vL{\ell-1}{n}(\omegaLI),\omegaLI),\\
  \vHatL{\ell}{n+1}(\omegaLI) 	 	& = \psiL(\vL{\ell}{n}(\omegaLI), \omegaLI),
  \end{split}
\end{equation}
for the levels $\ell =0,1,\ldots,L$ and level particles $i = 1,2, \ldots, M_\ell$
(where for convenience we introduce the convention that $\vHatL{-1}{} :=0$). 
Here the introduction of noise in the second argument of the $\psiL$ are correlated only within 
pairs, and are otherwise independent. 
Thereafter, sample mean and covariance matrices are computed
as a sum of sample moments over all levels:
\[
\begin{split}
\meanHatMLMC{n+1} &= \sum_{\ell=0}^L E_{M_\ell}[\DlVHat{n+1}(\omega_{\ell,\cdot})],\\
\covHatMLMC{n+1}  &= \sum_{\ell=0}^L \cov_{M_\ell}[\vHatL{\ell}{n+1}(\omega_{\ell,\cdot}) ] - 
\cov_{M_\ell}[ \vHatL{\ell-1}{n+1}(\omega_{\ell,\cdot} )],
\end{split}
\]
where we recall the sample moment notation~\eqref{eq:sampleAvg} and~\eqref{eq:sampleCov}.

It is necessary for stability of the algorithm that the sample covariance
appearing in the denominator of the gain is positive semi-definite, a condition
which is {\it not} guaranteed for multilevel estimators.  
This will therefore be {\it imposed} in the algorithm.  It would be of independent interest 
to devise multilevel estimators which preserve positivity without such imposition.  
Let
$$
C^{\rm ML}_n = \sum_{k=1}^d \lambda_k q_k q_k^\transpose
$$
denote the eigenvalue decomposition of the symmetric multilevel covariance.  Notice \kl{%in general 
that the condition min$_k(\lambda_k) \geq 0$ may not hold.
Define
\begin{equation}
\tilde{C}^{\rm ML}_n = \sum_{k=1; \lambda_k > 0}^d \lambda_k q_k q_k^\transpose.
\label{eq:covzee}
\end{equation}}
\rev{It is worth noting that this is not the only way to do this, and it may be possible to use a less invasive 
and/or or less expensive method to guarantee non-negativity of the covariance.  For example, banding \cite{bickel2008regularized}, shrinkage \cite{ledoit2004well},
thresholding \cite{bickel2008covariance}, or localization \cite{anderson2012localization} are some prospective alternatives.  
In particular, it will be necessary to consider such alternatives as the dimension grows and the cost of factorizing $C^{\rm ML}_n$ 
becomes a dominant consideration, but this is outside the scope of the present work.}
In the update %analysis 
step %we compute 
the multilevel Kalman gain is %will therefore be 
defined as follows
\begin{equation}
 \kMLMC{n+1}  = C^{\rm ML}_{n+1} H^\transpose (S^{\rm ML}_{n+1})^{-1}, \text{ where } 
  S^{\rm ML}_{n+1}  = H \tilde{C}^{\rm ML}_{n+1}  H^\transpose + \Gamma. 
\label{eq:newkay}
\end{equation}
{Next, all particle paths are corrected according to measurements and perturbed observations are 
added:
\begin{equation}\label{eq:upsamps}
\begin{split}
  \yTildeL{\ell}{n+1,i}		& = y_{n+1}^\obs + \etaL_{n+1,i}, \\
  \vL{\ell-1}{n+1}(\omega_{i,\ell})  & =     (I - \kMLMC{n+1} H ) \vHatL{\ell-1}{n+1}(\omega_{i,\ell} ) + \kMLMC{n+1} \yTildeL{\ell}{n+1,i},  \\
  \vL{\ell}{n+1}(\omega_{i,\ell})  & =     (I - \kMLMC{n+1} H ) \vHatL{\ell}{n+1}(\omega_{i,\ell} ) + \kMLMC{n+1} \yTildeL{\ell}{n+1,i},  
  \end{split}
\end{equation}
where %the sequence 
$\{\etaL_{n+1,i}\}_{i=1}^{M_\ell}$ are i.i.d.~with $\eta^{\{0\}}_{n+1,1} \sim N(0, \Gamma)$.}
It is in this step precisely that the pairs all become correlated with one another and the situation 
becomes significantly more complex than the i.i.d. case.  After the first update, this correlation propagates
forward through \eqref{eq:DlvHatDef} to the next observation time via this ensemble.  This is the conclusion
of the update %, or analysis 
step of the MLEnKF, and this multilevel ensemble is subsequently propagated 
forward to the next prediction %forecast 
time via \eqref{eq:DlvHatDef}.

The {\it multilevel} sample mean and covariance 
(in the case that \eqref{eq:covzee} has not modified the covariance, 
i.e. it has all non-negative eigenvalues without truncation) 
of this multilevel ensemble are given by:
%\[
\begin{align}
\nonumber
\meanMLMC{n+1} &= \sum_{\ell=0}^L E_{M_\ell}[\DlV{n+1}(\omega_{\ell,\cdot})] \\
& = (I-\kMLMC{n+1} H) \meanHatMLMC{n+1} + \kMLMC{n+1} \left[ E_{M_0}[\yTildeL{0}{n+1,\cdot}  - y_{n+1}^\obs ]   + y_{n+1}^\obs \right],\\
\nonumber
\covMLMC{n+1}  & = (I-\kMLMC{n+1} H) \covHatMLMC{n+1}(I-\kMLMC{n+1} H)^\transpose +
 \kMLMC{n+1} \cov_{M_0}[ \yTildeL{0}{n+1,\cdot}  - y_{n+1}^\obs ] {\kMLMC{n+1}}^\transpose \\
& = (I-\kMLMC{n+1} H) \covHatMLMC{n+1} + \kMLMC{n+1} \left[ \cov_{M_0}[ \yTildeL{0}{n+1,\cdot}  - y_{n+1}^\obs ] -\Gamma \right] {\kMLMC{n+1}}^\transpose.
\label{eq:incidental}
\end{align}
The second term appearing in each case is unbiased.  
For computing general quantities of interest, it is instructive to introduce
the empirical measure of the multilevel ensemble 
$\{[\hhv^{\ell}_{n}(\omega_{\ell,i}), \hhv^{\ell-1}_{n}(\omega_{\ell,i})]_{i=1}^{M_\ell}\}_{\ell=0}^L$, 
i.e.
\footnote{Similar may be done for the predicting distributions, but
  the updated distributions will be our primary interest.}
\begin{equation}
\mu^{\rm ML}_n = \frac{1}{M_0} \sum_{i=1}^{M_0} \delta_{
  \vL{0}{n}(\omega_{0,i})} + 
\sum_{\ell=1}^{L} \frac{1}{M_\ell} 
\sum_{i=1}^{M_\ell}( \delta_{\vL{\ell}{n}(\omega_{\ell,i}) } -
\delta_{\vL{\ell-1}{n}(\omega_{\ell,i})} ).
\label{eq:mlemp}
\end{equation}
Then, the following shorthand notation
for multilevel sample averages can be introduced.  
For any $\varphi: \bbR^d \rightarrow \bbR$, let 
\[
\mu_n^{\rm ML}(\varphi) := \int \varphi d\mu^{\rm ML}_n = 
\sum_{\ell=0}^{L}  \frac{1}{M_\ell}\sum_{i=1}^{M_\ell} 
{\varphi(\vL{\ell}{n}(\omega_{\ell,i})) - \varphi(\vL{\ell-1}{n}(\omega_{\ell,i}))}.
\]

\subsection{Nonlinear Kalman filtering}
\label{sec:mfenkf}

It will be useful to introduce the limiting process, 
in the case of nonlinear non-Gaussian forward model \eqref{eq:psiDefinition}, i.e. nonlinear \eqref{eq:sde}.
The following nonlinear Markov process defines the mean-field EnKF \cite{law2014deterministic}:
\begin{equation}
\qquad\;\;\;\;\quad\quad\mbox{Prediction}\;\left\{\begin{array}{lll}
   {v}_{n+1}& = \Psi ({\widehat{{v}}}_n), \\%\vspace{4pt}\\
{{m}}_{n+1}&=\bbE[{v}_{n+1}],\\%\vspace{4pt}\\
{{C}}_{n+1}&=\bbE[({v}_{n+1}-{m}_{n+1})\otimes({v}_{n+1}-{m}_{n+1})]
%({v}_{n+1}-\hm_{n+1})
 \end{array}\right.
\label{eq:mfpredict}
\end{equation}%                                   \] 
\begin{equation}
\mbox{Update}\left\{\begin{array}{llll} 
S_{n+1}&=H{{C}}_{n+1}H^\transpose+\Gamma \\
K_{n+1}&={C}_{n+1}H^\transpose S_{n+1}^{-1}\\
%\vspace{4pt}\\
{\tilde y}_{n+1}&=y_{n+1}^\obs+\tilde{\eta}_{n+1}\\
%\hh
{\widehat{{v}}}_{n+1}&=(I-K_{n+1}H){v}_{n+1}+K_{n+1}{\tilde y}_{n+1}.\\
\end{array}\right.
\label{eq:mfupdate}
\end{equation}
%                                     \]                                  
\vspace{4pt}
Here $\{\tilde{\eta}_n\}_{n=1}^N$ are i.i.d. draws from 
$N(0,\Gamma).$  
\kl{The expectations appearing above in \eqref{eq:mfpredict} are with respect to the random variable 
${v}_{n+1}$, which depends upon the randomness from the initial condition ${v}_0=u_0$, 
the maps $\Psi$, and $\tilde{\eta}_0,\dots, \tilde{\eta}_n$.  
The observed value $y^{\obs}$ is considered fixed and is not averaged over.  }
It is easy to verify that in the linear Gaussian case of the Section
\ref{subsec:kalmanFiltering}, the mean and variance of the 
above process correspond to the mean and variance of the filtering distribution.
Furthermore, it was shown in \cite{mandel2011convergence, le2011large} that 
the single level EnKF converges to the Kalman 
filtering distribution with the standard rate $\cO(M^{-1/2})$ in this case, \kl{as stated formally 
in \eqref{eq:empconv}. 
It was furthermore shown in \cite{le2011large} and \cite{law2014deterministic} 
that for nonlinear Gaussian state-space models and fully non-Gaussian models 
\eqref{eq:psiDefinition}, 
respectively, the {\it same convergence property holds}, with the measure corresponding  
to ${v}_n$ in \eqref{eq:mfpredict} and \eqref{eq:mfupdate} 
replacing $\mu_n$ in \eqref{eq:empconv},
%EnKF converges to the above process with the same rate 
as long as the model satisfies a Lipschitz criterion as in Assumption~\ref{ass:psilip}.
In this work, the aim is to show that the MLEnKF converges as well, and with 
a cost-to-$\varepsilon$ which is strictly smaller than its single level EnKF counterpart.
The true filtering distribution of $u_n|(Y_n=Y_n^{\rm obs})$ will not appear in the remainder of 
this work, and the variable ${v}_n$ will correspond to the solution of the above system 
(noting that the two are equivalent in the linear Gaussian case).}

\section{Theoretical Results}
\label{sec:theory}

The approximation error and
computational cost of approximating the true %Gaussian 
filtering distribution by MLEnKF when given a sequence of %fixed observations
observations $y_1, y_2, \ldots, y_n$ 
will be studied in this section. 
The notation $|\cdot|$ will be used for standard Euclidean norm (and the induced matrix norm)
and the covariance matrix of random variables $Z, X \in \rSet^d$
will be denoted
\[
\cov[Z,X] :=\E{ (Z-\E{Z}) (X-\E{X} )^\transpose },
\]
with the shorthand  $\cov[Z] =  \cov[Z,Z]$.
Before stating the main approximation theorem, it will be useful to 
present the basic assumptions that will be used throughout and the 
corresponding standard MLMC approximation results for i.i.d. samples,
as well as a slight variant which will be useful in what follows.

\begin{assumption} 
\label{ass:mlrates}
Consider the $d$-dimensional SDE~\eqref{eq:sde}
with initial data $u_0 \in \cup_{p \in \nSet} L^p(\Omega)$.
For the hierarchy of solution operators
defined in Section~\ref{sec:kalman}, let 
$\psiL$ denote a numerical solver using a uniform time 
step $\Dt{\ell} = 1/N_\ell$ with 
$N_\ell/N_{\ell-1} \ge \widehat N >1$ for $\ell=0,1,\ldots$. 
Let $\mathcal{F}$ denote the set of functions 
$\varphi : \rSet^d \to \rSet$ 
{which, for all $\ell \geq 0$ and all $u,v \in \cup_{p \in \nSet} L^p(\Omega)$,
and a given set of constants $\alpha, \beta, \gamma>0$ 
with %{$\alpha \geq \gamma/2$}, 
$\alpha \geq \min(\beta,\gamma)/2$, 
 fulfill
}
\begin{enumerate}%[(i)] 

\item[(i)] %\[\begin{split}%\max\left\{ 
%& 
$\abs{\E{ \varphi(\psiL(u)) -\varphi(\Psi(v)) } }
%\abs{\E{\varphi(u) - \varphi(v)}} \right\} 
 \lesssim N_\ell^{-\alpha}$, {and} %\\
$\abs{\E{\varphi(u) - \varphi(v)}}  \lesssim N_\ell^{-\alpha}$, { provided } 
$\abs{\E{u -v } } \lesssim  N_\ell^{-\alpha} \; ;$
%\end{split}\]
 
\item[(ii)] $ \| \varphi(\psiL(v)) - \varphi(\Psi^{\ell-1}(v)) \|_p \lesssim
  N_\ell^{-\beta/2}$, for all $p\ge 2 \; ;$ 

\item[(iii)] $\costL := \cost{\psiL(v)} \lesssim N_\ell^{\gamma} \; ;$
 \end{enumerate}
\kl{where, as stated above, 
the notation $f(M) \lesssim g(M)$ here is used to denote $f(M) = \cO(g(M)).$}
{
Assume further that all monomials of degree less than 
or equal to $2$, 
are contained in $\mathcal{F}$.}
\end{assumption}

\begin{remark}
An implication of {the above} condition $(ii)$ is that condition
(i) holds with $\alpha = \beta/2$.  However, for many numerical
schemes, there are settings where it is possible to achieve rates
$\alpha > \beta/2$~{(implemetationally, this may
  yield savings in the computational cost).}  
{The literature~\cite[Theorem 14.5.2]{KlPl92} and~\cite[Chapter
    7]{GrahamTalay}} {provide
  sufficient regularity conditions} {on the SDE problem and $\varphi$ 
  for the the Euler--Maruyama method to achieve the rate
exponents $\alpha = 1$ and {$\beta = 1$}, and the Milstein method to achieve 
  $\alpha=1$ and {$\beta=2$}.}
\end{remark}

We will now state the main theorem of this paper. It gives an upper
bound for the computational cost of achieving a sought accuracy in 
$L^p$-norm when using the MLEnKF method to approximate the expectation of an
observable.  The theorem may be considered an extension to the data
assimilation setting of earlier ``one-step''
cost vs.~error results in multilevel Monte Carlo,
cf.~\cite[Theorem 3.1]{Giles08} and~\cite[Theorem 1]{Cliffe11}. 
To reduce the number of repetitions in the below proofs we notice once and for all
that the process itself is %bounded 
in $L^p$ by Assumption~\ref{ass:psilip}, 
hence the realization giving rise to the observations $u_n$ 
and the observations themselves $y_n$ are as well, for $n=1,2,\ldots, N$. 
It follows from this and the %boundedness of 
finite norm of $K_n$ %for $n=1,2,\ldots, N$ 
\cite{le2011large, law2014deterministic, mandel2011convergence}
that the %two sequences of 
elements $\bar{\widehat{v}}_n$ and $\bar{v}_n$ given by \eqref{eq:mfpredict} and \eqref{eq:mfupdate}
are also in $L^p$ %bounded 
for $n=1,2,\ldots, N$. 
It will be assumed that the update comes at a marginal cost with respect to the prediction.  
This may be the case for complicated forward solution with small error tolerance, 
large ensemble, and comparably modest dimension $d$.

{\begin{definition}\label{def:localLip}
A function $\varphi: \rSet^d \to \rSet$ is said to be locally Lipschitz
continuous with at most polynomial growth at infinity provided that 
there exist positive scalars $\nu, C_\varphi <\infty$ such that 
\begin{equation}\label{eq:localLipschitz}
\abs{\varphi(x) -\varphi(y)} \leq C_\varphi |x-y|(1+|x|^{\nu} + |y|^{\nu}), \qquad \forall x,y \in \rSet^d.
\end{equation}
\end{definition}}

\kl{The notation $f(M) \eqsim g(M)$ will be used to indicate that
  there exist constants $\tilde c_1, \tilde c_2 >0$ such that $\tilde c_1
  g(M) \leq f(M) \leq \tilde c_2 g(M)$.}
  
\begin{theorem}[MLEnKF accuracy vs. cost]\label{thm:main}%mainTheoremErrorVsCost
Suppose Assumptions~\ref{ass:psilip} and~\ref{ass:mlrates} hold. 
For a given $\varepsilon >0$, 
let $L$ and $\{M_\ell\}_{\ell=0}^L$ be defined under the constraints
$L \eqsim \log(\varepsilon^{-1})$ %/\alpha$
and 
\begin{equation}\label{eq:chooseMlr}
 M_\ell \eqsim \left \lceil N_\ell^{-\frac{\beta + 2\gamma}{3}}               
 \begin{cases} 
  N^{2\alpha}_L, & \text{if} \quad \beta > \gamma, \\
  L^2N^{2\alpha}_L, & \text{if} \quad  \beta = \gamma, \\
  N^{2\alpha + \frac{2}{3}(\gamma - \beta)}_L, &\text{if} \quad \beta< \gamma. 
 \end{cases}\right\rceil.
\end{equation}
{Then for all functions $\varphi \in \mathcal{F}$ that are locally Lipschitz
  continuous with at most polynomial growth at infinity, cf.~Definition~\ref{def:localLip}},
we have that
\begin{equation}
\|\mu^{\rm ML}_n (\varphi) - \mu_n (\varphi) \|_p \lesssim \hh{\abs{\log(\varepsilon)}^n} \varepsilon,
\label{eq:lperror}
\end{equation}
where $\mu^{\rm ML}_n$ is the multilevel empirical measure defined in
\eqref{eq:mlemp}, where the samples are given by the multilevel predict
\eqref{eq:DlvHatDef} and update \eqref{eq:upsamps} formulae,
approximating the time $t_n=n$ mean-field EnKF distribution $\mu_n$
 (the filtering distribution $\mu_n=N(m_n,C_n)$ in the linear Gaussian case).
 And the computational cost of the MLEnKF estimator over the time
sequence satisfies %is bounded by 
\begin{equation}\label{eq:mlenkfCosts2}
\cost{\mathrm{MLEnKF}} \lesssim % \varepsilon^{-\gamma/\alpha} + 
\begin{cases}
\varepsilon^{-2}, & \text{if} \quad \beta > \gamma,\\              
\varepsilon^{-2} \abs{\log(\varepsilon)}^3, & \text{if} \quad \beta = \gamma,\\
\varepsilon^{-\left( 2 + \frac{\gamma-\beta}{\alpha} \right)}, & \text{if} \quad \beta < \gamma.
\end{cases}
\end{equation}
\end{theorem}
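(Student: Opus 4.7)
The plan is to prove \eqref{eq:lperror} by induction on $n$, with the inductive hypothesis also controlling the $L^p$ errors in the associated sample mean and sample covariance (the building blocks of $\kMLMC{n+1}$). For the inductive step I would introduce a hybrid intermediate measure $\bar\mu_{n+1}$ obtained by applying the \emph{exact} mean-field Kalman gain $K_{n+1}$ to the MLEnKF-predicted ensemble, and decompose
\[
\mu^{\rm ML}_{n+1}(\varphi) - \mu_{n+1}(\varphi)
= \bigl[\mu^{\rm ML}_{n+1}(\varphi) - \bar\mu_{n+1}(\varphi)\bigr]
+ \bigl[\bar\mu_{n+1}(\varphi) - \mu_{n+1}(\varphi)\bigr].
\]
The second bracket (the propagated error) is handled by noting that, with $K_{n+1}$ held fixed, the composition of $\varphi$ with the affine update and with $\Psi$ is again locally Lipschitz of polynomial growth (Assumption~\ref{ass:psilip} combined with Definition~\ref{def:localLip}), so it reduces to $\|\mu^{\rm ML}_n(\tilde\varphi)-\mu_n(\tilde\varphi)\|_p$ for a modified admissible $\tilde\varphi$, which the inductive hypothesis absorbs.

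The first bracket (the gain error) is the technical core. I would further interpose $\bar K_{n+1}$, the gain built from the true conditional covariance of the MLEnKF prediction ensemble, so that the remaining contribution becomes a genuine MLMC estimation error for $\meanHatMLMC{n+1}$ and $\covHatMLMC{n+1}$ about their conditional targets. Assumption~\ref{ass:mlrates}(i), applied to the relevant monomials of degree $\le 2$ (which are in $\mathcal F$), bounds the bias, and the choice $L \eqsim \log(\varepsilon^{-1})$ drives it to $O(\varepsilon)$. Assumption~\ref{ass:mlrates}(ii) yields $\|\DlVHat{n+1}\|_p \lesssim N_\ell^{-\beta/2}$ per level, and combining a Marcinkiewicz--Zygmund / BDG $L^p$ inequality with the level-wise balance \eqref{eq:chooseMlr} makes the total sampling contribution $O(\varepsilon)$, up to the logarithmic factor present in the boundary regime $\beta=\gamma$.

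The main obstacle, as the paper itself flags, is that after the first update the level-difference pairs $\{(\vHatL{\ell}{n+1},\vHatL{\ell-1}{n+1})\}_\ell$ are \emph{not} independent across $\ell$: they are coupled through the common $\kMLMC{n}$ and through the shared perturbed observations. I would resolve this by conditioning on the $\sigma$-algebra $\mathcal F_n$ generated by the multilevel ensemble through time $n$, together with the shared observation perturbations; conditional on this, the samples at different levels are once again independent, so the standard MLMC variance estimate applies conditionally, and the outer expectation combines this with the inductive error through H\"older's inequality. The truncation \eqref{eq:covzee} is handled by observing that the event $\{\min_k \lambda_k < 0\}$ has $L^p$-small probability via Markov's inequality applied to $\|\covHatMLMC{n+1}-\hhc_{n+1}\|_p$, contributing only higher-order terms. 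The $|\log(\varepsilon)|^n$ accumulation in \eqref{eq:lperror} arises because each iterated Lipschitz-propagation step may multiply the previous error by a constant carrying a single factor of $|\log\varepsilon|$ (from $\|\kMLMC{n}\|_p$ in the boundary regime), which compounds over $n$ steps.

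Finally, the cost claim follows by direct substitution: writing $\mathrm{cost}\eqsim \sum_{\ell=0}^L M_\ell\, N_\ell^\gamma$ with $N_\ell = \widehat N^\ell N_0$ and $\{M_\ell\}$ from \eqref{eq:chooseMlr}, the geometric sum is dominated by $\ell=0$ when $\beta>\gamma$, picks up an $L$-factor (i.e.\ $\log\varepsilon^{-1}$) when $\beta=\gamma$, and is dominated by $\ell=L$ when $\beta<\gamma$; combined with $L \eqsim \log(\varepsilon^{-1})$ and $N_L^\alpha \eqsim \varepsilon^{-1}$, this yields the three regimes in \eqref{eq:mlenkfCosts2}.
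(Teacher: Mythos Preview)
Your decomposition has a genuine gap in the ``propagated error'' term. You claim that $\bar\mu_{n+1}(\varphi)-\mu_{n+1}(\varphi)$, obtained by applying the exact gain $K_{n+1}$ to the MLEnKF prediction, reduces to $\|\mu^{\rm ML}_n(\tilde\varphi)-\mu_n(\tilde\varphi)\|_p$ for some deterministic admissible $\tilde\varphi$. But the composition $\varphi\circ(I-K_{n+1}H)\circ\Psi^\ell$ involves the \emph{random} maps $\Psi^\ell$ (through the fresh Wiener increments on $[n,n+1]$), and these vary across particles; so there is no single deterministic $\tilde\varphi$ to which the inductive hypothesis at time $n$ can be applied. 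Your conditioning idea has a parallel defect: conditioning on $\mathcal F_n$ freezes each $\hhv^\ell_n(\omega_{\ell,i})$ to a different value, so the predicted pairs are conditionally independent but \emph{not} identically distributed, and the Marcinkiewicz--Zygmund step then requires the level-wise decay $\|\hhv^\ell_n-\hhv^{\ell-1}_n\|_p\lesssim N_\ell^{-\beta/2}$ for the \emph{MLEnKF} ensemble, which is neither part of your stated inductive hypothesis nor obviously implied by it.

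The paper avoids both issues by inducting at the \emph{particle} level rather than the measure level. It introduces a shadow multilevel ensemble $\{\bar\hhv^\ell_n(\omega_{\ell,i})\}$ that uses the \emph{same} noise realizations as the MLEnKF ensemble but the deterministic mean-field gain $K_n$ in every update; this shadow is genuinely i.i.d.\ in $i$ and independent across $\ell$, so standard MLMC estimates (Lemmas~\ref{lem:boundsBarV},~\ref{lem:disccov},~\ref{lem:iidcover}) apply to it directly. The induction is then on $\sum_{\ell=0}^L\|\hhv^\ell_n-\bar\hhv^\ell_n\|_p$ (Lemma~\ref{lem:ensdist}), using the continuity of the gain in the covariance (Lemmas~\ref{lem:mlcae},~\ref{lem:gce}) and of the sample covariance in the particles (Lemma~\ref{lem:mlcov}). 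Two further points: the truncation \eqref{eq:covzee} is handled deterministically via $|\tilde C^{\rm ML}_n-C^{\rm ML}_n|\le|C^{\rm ML}_n-C_n|$ (Lemma~\ref{lem:mlcae}), not via a small-probability argument; and the $|\log\varepsilon|^n$ factor does \emph{not} come from $\|K^{\rm ML}_n\|_p$ in the boundary regime---it appears in all regimes, from the sum $\sum_{\ell=0}^L\|\tilde y^\ell_n-H\bar\hv^\ell_n\|_{2p}$ over $L+1\eqsim|\log\varepsilon|$ levels in \eqref{eq:lFactor}.
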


\begin{remark}
  \hh{The growth in error factor $\abs{\log(\varepsilon)}^n$
  in~\eqref{eq:lperror} is due to a
  propagation of perturbed observation errors of the
  MLEnKF estimator that %we 
  has been conservatively bounded by the triangle
  inequality in~\eqref{eq:lFactor}. In our numerical tests we do however
  \emph{not} observe the error growth factor, and therefore we believe it might be
  possible to %get rid of 
  eliminate this factor by sharper theoretical bounds.}
\end{remark}

The proof \hh{of Theorem~\ref{thm:main}} follows roughly along the
same lines as that of \cite{le2011large}, however with more notation
\hh{and longer calculations} due to the multilevel aspect. The proof
also has connections to the work~\cite{Chernov14}, in which an MLMC
method is developed for estimation of higher order central moments.

It will be convenient to introduce
the mean-field limiting multilevel ensemble
$\{[\overline{v}^{\ell}_{n}(\omega_{\ell,i}), \overline{v}^{\ell-1}_{n}(\omega_{\ell,i})]_{i=1}^{M_\ell}\}_{\ell=0}^L$, 
\cite{le2011large, law2014deterministic, mandel2011convergence}, which
evolves according to the same equations {\it with the same
  realizations of noise} except the covariance $\hc_n$, hence the
Kalman gain $K_n$, are given by limiting formulae
in~\eqref{eq:mfpredict} and~\eqref{eq:mfupdate}.  That is, the
intra-level pairs of ensemble members
$(\bar{v}_n^{\ell}(\omegaLI), \bar{v}_n^{\ell-1}(\omegaLI))$
are {\it independent and identically distributed} (i.i.d.) over index
$i$, and they are independent between levels.  
An ensemble member %$\bar{v}^{\ell}$ 
$(\hat{\bar{v}}_{n}^{\ell}(\omegaLI), \hat{\bar{v}}_{n}^{\ell-1}(\omegaLI))$
maps to 
$({\bar{v}}_{n+1}^{\ell}(\omegaLI),{\bar{v}}_{n+1}^{\ell-1}(\omegaLI))$
as in \eqref{eq:DlvHatDef}.  Then 
$(\hat{\bar{v}}_{n+1}^{\ell}(\omegaLI), \hat{\bar{v}}_{n+1}^{\ell-1}(\omegaLI))$ is obtained as in 
\eqref{eq:upsamps}, except with $K_{n+1}$ from \eqref{eq:mfupdate}
replacing $K_{n+1}^{\rm ML}$ in \eqref{eq:upsamps}.
The noise realizations $\{\omegaLI\}$ are assumed to be the same as
the EnKF ensemble member $({v}_n^{\ell}(\omegaLI), {v}_n^{\ell-1}(\omegaLI))$.
The sole difference is that the
limiting ensemble is independent between levels and the pairs within a
level are i.i.d.  {\it This is because the covariance and gain come
  from the infinite limiting system \eqref{eq:mfpredict} and
  \eqref{eq:mfupdate}.}  The only correlations are between
$\bar{v}_n^{\ell}(\omegaLI)$ and
$\bar{v}_n^{\ell-1}(\omegaLI)$, due to the $\omegaLI $.
Hence there is no multiplicative propagation of correlations within a
level or between levels.  This crucial fact allows to (a) on the one
hand extend standard multilevel theory for i.i.d. draws over multiple
updates, and (b) on the other hand, establish the required proximity
of the two multilevel ensembles particle-wise, based on convergence
of the random gains $K^{\rm ML}_n$ to the deterministic ones $K_n$.
The latter will require the greatest effort and will dominate the
proof by means technical lemmas.  Note that %the processes
$\|\bar{v}^{\ell}_n\|_p$, $\|\bar{\hhv}^{\ell}_n\|_p$,  $|K_n| <\infty$, following from  Assumptions~\ref{ass:psilip}.

The first step is to bound the multilevel predicting covariance in terms of its constituents, the ensemble members.  
The gain is then bounded in terms of the covariance, and ultimately the updated ensemble in terms of the 
predicting ensemble and the covariance.  
The rate appears only by virtue of the convergence of the i.i.d. ensemble covariance, 
and it is propagated forward by induction.
Only the {\it predicting} covariance will be considered and hats will be omitted
to avoid unnecessary notation.

Recall the multilevel Kalman gain is 
defined as follows
\begin{equation}
K^{\rm ML}_n = C^{\rm ML}_n H^\transpose (H \tilde{C}^{\rm ML}_n H^\transpose + \Gamma)^{-1}, 
\nonumber
\end{equation}
where 
\begin{equation}
\tilde{C}^{\rm ML}_n = \sum_{k=1; \lambda_k> 0}^d \lambda_k q_k q_k^\transpose,
\label{eq:covzee1}
\end{equation}
for eigenpairs $\{\lambda_k,q_k\}$ of $C^{\rm ML}_n$.
The following micro-lemma will be necessary to control the error in the gain.

\begin{lemma}[multilevel covariance approximation error]  
\label{lem:mlcae}
Let $\tilde{C}^{\rm ML}_n$ be given by~\eqref{eq:covzee1}. Then 
the following %\kl{asymptotic inequality} 
holds 
\begin{equation}
  |\tilde{C}^{\rm ML}_n - {C}^{\rm ML}_n| \leq |C^{\rm ML}_n - C_n|,
  \label{eq:mlcov_bound}
\end{equation}
\kl{where $|\cdot|$ denotes the induced 2-norm for matrices.}
\end{lemma}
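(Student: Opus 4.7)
The plan is to exploit the fact that the mean-field limiting covariance $C_n$ (from \eqref{eq:mfpredict}) is symmetric positive semi-definite, whereas the multilevel sample covariance $C^{\rm ML}_n$ need not be. The truncation $\tilde C^{\rm ML}_n$ simply zeroes out the negative part of the spectrum of $C^{\rm ML}_n$, so $|\tilde C^{\rm ML}_n - C^{\rm ML}_n|$ equals the magnitude of the most negative eigenvalue. I will exhibit a single unit vector that simultaneously witnesses this magnitude and forces $|C^{\rm ML}_n - C_n|$ to be at least as large.

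First, I would dispose of the trivial case: if $\min_k \lambda_k \ge 0$, then no eigenvalue is truncated, $\tilde C^{\rm ML}_n = C^{\rm ML}_n$, and the left-hand side of \eqref{eq:mlcov_bound} is zero. So assume $\lambda_{\min} := \min_k \lambda_k < 0$. Using orthonormality of the eigenvectors $\{q_k\}$ of the symmetric matrix $C^{\rm ML}_n$, I would write
\begin{equation*}
C^{\rm ML}_n - \tilde C^{\rm ML}_n \;=\; \sum_{k:\,\lambda_k \le 0} \lambda_k\, q_k q_k^\transpose,
\end{equation*}
whose induced $2$-norm is precisely $|\lambda_{\min}|$.

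Next, let $q$ be a unit eigenvector of $C^{\rm ML}_n$ associated with $\lambda_{\min}$. Then $q^\transpose C^{\rm ML}_n q = \lambda_{\min} < 0$, while $q^\transpose C_n q \ge 0$ by positive semi-definiteness of $C_n$. Subtracting, I obtain
\begin{equation*}
q^\transpose (C^{\rm ML}_n - C_n) q \;=\; \lambda_{\min} - q^\transpose C_n q \;\le\; \lambda_{\min} \;<\; 0,
\end{equation*}
so $|q^\transpose (C^{\rm ML}_n - C_n) q| \ge |\lambda_{\min}|$. Since $C^{\rm ML}_n - C_n$ is symmetric, the variational characterization of the induced $2$-norm yields $|C^{\rm ML}_n - C_n| \ge |q^\transpose (C^{\rm ML}_n - C_n) q|$, and combining these gives $|C^{\rm ML}_n - C_n| \ge |\lambda_{\min}| = |\tilde C^{\rm ML}_n - C^{\rm ML}_n|$, which is the claim.

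There is no real obstacle here; the only conceptual point is the observation that the mean-field covariance $C_n$ is positive semi-definite (which is immediate from its definition as an expectation of an outer product in \eqref{eq:mfpredict}), so the gap between $C^{\rm ML}_n$ and the cone of PSD matrices is automatically controlled by $|C^{\rm ML}_n - C_n|$. The rest is a one-line Rayleigh-quotient argument.
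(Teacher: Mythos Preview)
Your proof is correct and is essentially identical to the paper's own argument: both compute $|\tilde C^{\rm ML}_n - C^{\rm ML}_n|$ as the magnitude of the most negative eigenvalue of $C^{\rm ML}_n$, test the corresponding unit eigenvector against $C^{\rm ML}_n - C_n$, and use positive semi-definiteness of $C_n$ together with the Rayleigh-quotient characterization of the induced $2$-norm for symmetric matrices. Your write-up is slightly more explicit (you separate out the trivial case $\lambda_{\min}\ge 0$), but there is no substantive difference.
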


\begin{proof}
Notice \kl{that}
\begin{equation}
|\tilde{C}^{\rm ML}_n - {C}^{\rm ML}_n| =  {\rm max}_{\{j; \lambda_j<0\}} \{|\lambda_j|\}.
\end{equation}
Denote the associated eigenvector by $u_{\rm max}$ (normalized to $|u_{\rm max}|=1$).
\kl{Notice that for any $A=A^\transpose$,
\[
|A| = {\rm sup}_u \frac{|u^\transpose A u|}{|u|^2} = {\rm max}_k |\lambda_k|,
\]
where $\lambda_k$ are the eigenvalues of $A$.} Since $C_n \geq 0$, one has that 
\[
| u_{\rm max}^\transpose ({C}^{\rm ML}_n - C_n) u_{\rm max} | = u_{\rm max}^\transpose C_n u_{\rm max}
-u_{\rm max}^\transpose {C}^{\rm ML}_n u_{\rm max} \geq |\tilde{C}^{\rm ML}_n - {C}^{\rm ML}_n|.
\]

\end{proof}

The next step is to bound the gain error, which is done in the following lemma.
\begin{lemma}[Continuity of the gain in the covariance]
\label{lem:gce}
There is a constant $c_n<\infty$, depending on $|H|, \gamma_{\min},$ and $|K_n H|$
such that 
\begin{equation}
|K^{\rm ML}_n - K_n| \leq c_n |{C}^{\rm ML}_{n}-C_{n}|, %\leq |C^{\rm ML}_n - C^L_n| + |C^L_n - C_n|,
\label{eq:gaincov}%split}
\end{equation}
\kl{where $\gamma_{\rm min} >0$ is the smallest eigenvalue of $\Gamma$.}
\end{lemma}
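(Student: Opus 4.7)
My plan is to use the standard resolvent-style identity $(S^{\rm ML}_n)^{-1} - S_n^{-1} = (S^{\rm ML}_n)^{-1}(S_n - S^{\rm ML}_n)S_n^{-1}$, together with the key algebraic observation that $C_n H^\transpose = K_n S_n$, so that differences in $S$ get absorbed into a factor of $K_n H$ rather than $|K_n|\,|H|$. This is what will make the constant $c_n$ depend on $|K_n H|$ as stated, rather than the looser $|K_n|\,|H|$.

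Concretely, I would first write
\[
K^{\rm ML}_n - K_n = (C^{\rm ML}_n - C_n)H^\transpose (S^{\rm ML}_n)^{-1} + C_n H^\transpose\bigl[(S^{\rm ML}_n)^{-1} - S_n^{-1}\bigr],
\]
and then, using $C_n H^\transpose S_n^{-1} = K_n$ and $S_n - S^{\rm ML}_n = H(C_n - \tilde{C}^{\rm ML}_n)H^\transpose$, rewrite the second summand as $K_n H(C_n - \tilde{C}^{\rm ML}_n)H^\transpose (S^{\rm ML}_n)^{-1}$. Taking norms and using $|(S^{\rm ML}_n)^{-1}| \le \gamma_{\min}^{-1}$ (which holds because $\tilde{C}^{\rm ML}_n \succeq 0$ by its construction in \eqref{eq:covzee1}, so $S^{\rm ML}_n \succeq \Gamma$), I obtain
\[
|K^{\rm ML}_n - K_n| \leq \frac{|H|}{\gamma_{\min}}\bigl(|C^{\rm ML}_n - C_n| + |K_n H|\,|C_n - \tilde{C}^{\rm ML}_n|\bigr).
\]

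Finally, I would invoke Lemma \ref{lem:mlcae} via the triangle inequality to get $|C_n - \tilde{C}^{\rm ML}_n| \le |C_n - C^{\rm ML}_n| + |C^{\rm ML}_n - \tilde{C}^{\rm ML}_n| \le 2|C_n - C^{\rm ML}_n|$, yielding the desired bound with $c_n = |H|(1 + 2|K_n H|)/\gamma_{\min}$.

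The main (mild) obstacle is recognizing the identity $C_n H^\transpose S_n^{-1} = K_n$ that collapses $C_n H^\transpose$ into the mean-field gain; without this step one only gets a bound in terms of $|K_n|\,|H|$ rather than the sharper $|K_n H|$. Everything else is routine matrix-norm manipulation and a single application of Lemma \ref{lem:mlcae}.
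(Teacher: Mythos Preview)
Your proof is correct and essentially identical to the paper's: same add-and-subtract decomposition, same resolvent identity, same use of $C_n H^\transpose S_n^{-1}=K_n$ to extract the factor $K_nH$, same bound $|(S^{\rm ML}_n)^{-1}|\le \gamma_{\min}^{-1}$, same appeal to Lemma~\ref{lem:mlcae}, and the same final constant $c_n=|H|(1+2|K_nH|)/\gamma_{\min}$. One cosmetic slip: the resolvent identity as you wrote it, $(S^{\rm ML}_n)^{-1}-S_n^{-1}=(S^{\rm ML}_n)^{-1}(S_n-S^{\rm ML}_n)S_n^{-1}$, places $S_n^{-1}$ on the right, so to absorb $C_nH^\transpose$ into $K_n$ on the left you need the (equally valid) form $S_n^{-1}(S_n-S^{\rm ML}_n)(S^{\rm ML}_n)^{-1}$; your stated rewriting of the second summand is correct once this is used.
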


\begin{proof}
Recall that
\begin{equation}\label{eq:kaytri}
\begin{split} 
K_n - \kMLMC{n} &=  C_n H^\transpose \left ( (HC_n H^\transpose + \Gamma)^{-1}  - (H\tilde{C}^{\rm ML}_{n} H^\transpose + \Gamma)^{-1} \right )  \\
  & \quad +  (C_{n} - {C}^{\rm ML}_{n})H^\transpose (H \tilde{C}^{\rm ML}_{n} H^\transpose + \Gamma)^{-1},
\end{split}
\end{equation}
where $\tilde{C}^{\rm ML}_{n} \geq 0$ is defined in \eqref{eq:covzee}, and notice that
\begin{multline}
(H C_n H^\transpose + \Gamma)^{-1}  - (H\tilde{C}^{\rm ML}_{n} H^\transpose + \Gamma)^{-1}\\ = 
(HC_n H^\transpose +\Gamma)^{-1}H (\tilde{C}^{\rm ML}_{n} -C_n) H^\transpose(H\tilde{C}^{\rm ML}_{n} H^\transpose +\Gamma) ^{-1}.
\end{multline}
So 
\begin{eqnarray*}
K_n - \kMLMC{n} &=&   K_n H (\tilde{C}^{\rm ML}_{n} -C_n) H^\transpose (H \tilde{C}^{\rm ML}_{n}H^\transpose + \Gamma)^{-1} \\
&+& (C_{n} - {C}^{\rm ML}_{n})H^\transpose (H \tilde{C}^{\rm ML}_{n} H^\transpose + \Gamma)^{-1}. 
%\]
\label{eq:kay}
\end{eqnarray*}
Note that $x^\transpose (\Gamma+B) x \geq x^\transpose \Gamma x \geq \gamma_{\rm min}$ 
for all $x \in \bbR^d$ whenver $B=B^\transpose \geq 0$, and this implies that 
$|(H \tilde{C}^{\rm ML}_{n} H^\transpose + \Gamma)^{-1}| \leq 1/{\gamma_{\rm min}}$.
It follows by \eqref{eq:mlcov_bound} that 
\begin{equation*}\label{eq:kayer1}
%\nonumber
|K_n - \kMLMC{n}| \leq \frac{|H|}{\gamma_{\rm min}}(1+ 2|K_n H|) | C_{n} - {C}^{\rm ML}_{n}|.
\end{equation*}
\end{proof}

\rev{It is worth noting that the multilevel gain error is bounded by the 
{\it unmodified} multilevel sample covariance error, following from Lemma \ref{lem:mlcae}, 
so modification in \eqref{eq:covzee1} will not affect the ultimate approximation error.}

\begin{theorem}
\label{thm:covspliteps}
Suppose Assumptions~\ref{ass:psilip} and~\ref{ass:mlrates} hold.  
For any $\varepsilon>0$, 
let $L$ and $\{M_\ell\}_{\ell=0}^L$ be defined as in 
Theorem~\ref{thm:main}.
Then the following \kl{asymptotic} inequality holds
\begin{equation}
\|C^{\rm ML}_n - C_n\|_p \lesssim \varepsilon 
+ \|C^{\rm ML}_n - \bar{C}^{\rm ML}_n\|_p
\label{eq:covspliteps}
\end{equation}
with a cost which satisfies %bounded by 
\eqref{eq:mlenkfCosts2}.
\end{theorem}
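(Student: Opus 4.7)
The natural strategy is a triangle inequality that isolates the ``sampling-only'' part of the error from the part caused by replacing the deterministic gain $K_n$ with the sample gain $K_n^{\rm ML}$. Specifically, the plan is to write
\begin{equation*}
\|C^{\rm ML}_n - C_n\|_p \;\leq\; \|C^{\rm ML}_n - \bar C^{\rm ML}_n\|_p + \|\bar C^{\rm ML}_n - C_n\|_p,
\end{equation*}
and then reduce the entire theorem to showing $\|\bar C^{\rm ML}_n - C_n\|_p \lesssim \varepsilon$, since the first term is the one kept explicitly on the right-hand side of \eqref{eq:covspliteps}. The whole point of introducing the mean-field multilevel ensemble $\{[\bar v^\ell_n(\omegaLI),\bar v^{\ell-1}_n(\omegaLI)]\}$ earlier is precisely that it is i.i.d.\ across particles and independent across levels, so $\bar C^{\rm ML}_n$ is a standard MLMC estimator of $C_n$ and the usual one-step theory of~\cite{Giles08,Cliffe11} should apply almost verbatim.

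For the mean-field term I would decompose $\bar C^{\rm ML}_n - C_n$ entrywise. Each entry of $C_n$ is $\bbE[\bar v_n^{(i)}\bar v_n^{(j)}] - \bbE[\bar v_n^{(i)}]\bbE[\bar v_n^{(j)}]$, and the corresponding entry of $\bar C^{\rm ML}_n$ is precisely the MLMC estimator applied to the monomial $\varphi(x)=x^{(i)}x^{(j)}$ minus the product of MLMC estimators applied to $\varphi(x)=x^{(i)}$ and $\varphi(x)=x^{(j)}$. Because Assumption~\ref{ass:mlrates} explicitly places all monomials of degree $\leq 2$ into the class $\mathcal{F}$, I can invoke the weak rate (i) and strong rate (ii) for both the second-moment MLMC estimator and the two first-moment MLMC estimators. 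A standard bias/variance split in $L^p$, using a Marcinkiewicz--Zygmund-type bound to go from variance to $L^p$-moments of the centered sums of independent increments, then yields
\begin{equation*}
\|\bar C^{\rm ML}_n - C_n\|_p \;\lesssim\; N_L^{-\alpha} \;+\; \Bigl(\sum_{\ell=0}^L \frac{N_\ell^{-\beta}}{M_\ell}\Bigr)^{1/2}
\end{equation*}
(up to the product-of-means cross terms, which are handled by adding and subtracting $\bbE[\bar v_n]$ and using that the mean-field process has finite $L^p$ moments of all orders by Assumption~\ref{ass:psilip}). Plugging in $L\eqsim\log(\varepsilon^{-1})$ and the choice of $\{M_\ell\}$ from \eqref{eq:chooseMlr}, both the bias and the sampling-variance terms are balanced to be $\cO(\varepsilon)$ in the three regimes $\beta\gtreqless\gamma$, exactly as in the classical MLMC analysis.

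For the cost, since each level-$\ell$ sample in the mean-field and MLEnKF ensembles carries cost $\cO(N_\ell^\gamma)$ by Assumption~\ref{ass:mlrates}(iii), the total cost is $\sum_\ell M_\ell N_\ell^\gamma$, and substituting the chosen $M_\ell$ yields precisely the three-regime bound~\eqref{eq:mlenkfCosts2}. The main obstacle I anticipate is the cross-term bookkeeping when converting the outer-product MLMC estimator into an estimator for the centered covariance: one must carefully expand
\begin{equation*}
\bar C^{\rm ML}_n - C_n \;=\; \bigl(E^{\rm ML}[\bar v_n\bar v_n^\transpose] - \bbE[\bar v_n\bar v_n^\transpose]\bigr) - \bigl(E^{\rm ML}[\bar v_n](E^{\rm ML}[\bar v_n])^\transpose - \bbE[\bar v_n]\bbE[\bar v_n]^\transpose\bigr)
\end{equation*}
and bound the second difference by $\|\,E^{\rm ML}[\bar v_n] - \bbE[\bar v_n]\,\|_{2p}(\|E^{\rm ML}[\bar v_n]\|_{2p} + \|\bbE[\bar v_n]\|_{2p})$ via H\"older, so that the final $p$-norm control reduces to standard MLMC $L^{2p}$ bounds on first- and second-moment MLMC estimators. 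Once that bookkeeping is done, the conclusion follows by collecting the three contributions into the single bound~\eqref{eq:covspliteps}.
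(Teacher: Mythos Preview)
Your high-level strategy matches the paper's: a triangle inequality that leaves $\|C^{\rm ML}_n - \bar C^{\rm ML}_n\|_p$ explicitly on the right and reduces the remainder to an $\cO(\varepsilon)$ bound on the mean-field piece. The paper inserts one extra intermediate, the deterministic level-$L$ covariance $C^L_n$, splitting
\[
|C^{\rm ML}_n - C_n| \leq |C^L_n - C_n| + |\bar C^{\rm ML}_n - C^L_n| + |C^{\rm ML}_n - \bar C^{\rm ML}_n|,
\]
and treats the first term (pure discretization bias, Lemma~\ref{lem:disccov}) and the second (pure i.i.d.\ sampling error, Lemma~\ref{lem:iidcover}) separately. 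Your combined bias-plus-variance bound on $\|\bar C^{\rm ML}_n - C_n\|_p$ is equivalent in spirit.

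However, there is a structural error in your expansion of $\bar C^{\rm ML}_n$. You assert that the $(i,j)$ entry of $\bar C^{\rm ML}_n$ is ``the MLMC estimator applied to $x^{(i)}x^{(j)}$ minus the product of MLMC estimators applied to $x^{(i)}$ and $x^{(j)}$'', and then expand
\[
\bar C^{\rm ML}_n - C_n = \bigl(E^{\rm ML}[\bar v_n\bar v_n^\transpose] - \bbE[\bar v_n\bar v_n^\transpose]\bigr) - \bigl(E^{\rm ML}[\bar v_n](E^{\rm ML}[\bar v_n])^\transpose - \bbE[\bar v_n]\bbE[\bar v_n]^\transpose\bigr).
\]
This is not the estimator defined in the paper. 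The multilevel covariance is
\[
\bar C^{\rm ML}_n = \sum_{\ell=0}^L \bigl(\cov_{M_\ell}[\bar v^\ell_n] - \cov_{M_\ell}[\bar v^{\ell-1}_n]\bigr),
\]
a telescoping sum of \emph{level-wise sample covariances}. The raw second-moment part $\sum_\ell \bigl(E_{M_\ell}[\bar v^\ell(\bar v^\ell)^\transpose] - E_{M_\ell}[\bar v^{\ell-1}(\bar v^{\ell-1})^\transpose]\bigr)$ does telescope to $E^{\rm ML}[\bar v\bar v^\transpose]$, but the centering part
\[
\sum_{\ell=0}^L \Bigl(E_{M_\ell}[\bar v^{\ell-1}]\,E_{M_\ell}[\bar v^{\ell-1}]^\transpose - E_{M_\ell}[\bar v^\ell]\,E_{M_\ell}[\bar v^\ell]^\transpose\Bigr)
\]
does \emph{not} collapse to $-E^{\rm ML}[\bar v]\,E^{\rm ML}[\bar v]^\transpose$, because each level uses its own sample mean. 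Your product-of-means bookkeeping therefore applies to a different estimator than the one you must bound.

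The paper extracts the $N_\ell^{-\beta/2}$ rate from the level-$\ell$ covariance increment directly, using the identity $aa^\transpose - bb^\transpose = \tfrac12\bigl[(a+b)(a-b)^\transpose + (a-b)(a+b)^\transpose\bigr]$ with $a=\bar v^\ell_n$, $b=\bar v^{\ell-1}_n$ (applied both inside $E_{M_\ell}[\cdot]$ and to the product of sample means). After H\"older and Marcinkiewicz--Zygmund per level this produces a factor $\|\bar v^\ell_n - \bar v^{\ell-1}_n\|_{2p}\lesssim N_\ell^{-\beta/2}$, and then a triangle inequality over levels gives $\sum_\ell M_\ell^{-1/2}N_\ell^{-\beta/2}\lesssim\varepsilon$ rather than your $\bigl(\sum_\ell M_\ell^{-1}N_\ell^{-\beta}\bigr)^{1/2}$ form (the latter would require an orthogonality argument that does not immediately hold in $L^p$ for $p>2$). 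Once you switch to this level-wise expansion, the rest of your outline is correct and the cost calculation is as you describe.
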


\begin{proof}
Let $C^L_n$ denote 
the predicting covariance of the final $L^{th}$ level limiting system at time $n$, 
in the sense that the forward map above is replaced by $\Psi^L$, 
but the gain comes from the continuum mean-field limiting system.
Furthermore, let $\bar{C}^{\rm ML}_n$ denote the covariance associated to
the multilevel ensemble $\{(\bar{v}^{\ell}_{n,i})_{i=1}^{M_\ell}\}_{\ell=1}^L$.
The triangle inequality is used to split
\begin{equation}
|C^{\rm ML}_n - C_n| \leq |C^L_n - C_n| + |\bar{C}^{\rm ML}_n - C^L_n| + |C^{\rm ML}_n - \bar{C}^{\rm ML}_n|,
\label{eq:covsplit}
\end{equation}
and each term will be dealt with in turn, in the following three
lemmas.  The proof of the theorem is %given 
done after establishing 
\hh{Lemmas~\ref{lem:boundsBarV}},~\ref{lem:disccov} and~\ref{lem:iidcover},
which provide the asymptotic bound on the first two terms.
\end{proof}

\hh{
\begin{lemma}
  \label{lem:boundsBarV}
  Suppose Assumptions~\ref{ass:psilip} and~\ref{ass:mlrates} hold.  
  For any $\varepsilon>0$, let $L$ and $\{M_\ell\}_{\ell=0}^L$ be defined as in 
  Theorem~\ref{thm:main}. Then, for any finite $n$, 
  \begin{equation}\label{eq:barVWeak}
      \max\parenthesis{\abs{\E{\bar\hv_n^L - \hv_n}}, \abs{\E{\bar\hhv_n^L - \hhv_n}}} \lesssim \varepsilon, 
  \end{equation}
  and for $\ell =0,1,\ldots, L$ and all $p\ge 2$, 
  \begin{equation}\label{eq:barVStrong}
      \max\parenthesis{\norm{\bar \hv_n^\ell - \bar \hv_n^{\ell-1}}_p, \norm{\bar \hhv_n^\ell - \bar \hhv_n^{\ell-1}}_p}  \lesssim N_\ell^{-\beta/2}.
  \end{equation}

\end{lemma}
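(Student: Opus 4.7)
The plan is to prove both claims simultaneously by induction on the observation time $n$, exploiting two structural features of the mean-field limiting system: the Kalman gain $K_n$ from \eqref{eq:mfupdate} is deterministic, and within an intra-level pair the perturbed-observation noise $\tilde\eta_{n+1}^{\{\ell\}}$ is shared, as is the driving noise entering $\Psi^\ell$ and $\Psi^{\ell-1}$. At $n=0$ all ensemble members coincide with the common initial condition, so the inductive hypothesis holds trivially; for the $\ell=0$ case of \eqref{eq:barVStrong} the convention $\bar\hv_k^{-1}=0$ reduces the claim to an $L^p$ bound on the initial data, which is absorbed into the hidden constant.

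For the inductive step of the weak bound \eqref{eq:barVWeak}, I would first handle the prediction: since $\bar\hv_{n+1}^L = \Psi^L(\bar\hhv_n^L)$ and $\hv_{n+1} = \Psi(\hhv_n)$, I apply Assumption \ref{ass:mlrates}(i) with $\varphi$ equal to each coordinate function (which lies in $\mathcal F$ since monomials of degree up to two are included), using the inductive weak bound together with the choice $N_L^{-\alpha}\eqsim\varepsilon$ to verify the "provided" clause. This yields $\abs{\E{\bar\hv_{n+1}^L - \hv_{n+1}}} \lesssim N_L^{-\alpha} \lesssim \varepsilon$ componentwise. In the update step the map is affine with deterministic gain and shared observation noise, so
\[
\bar\hhv_{n+1}^L - \hhv_{n+1} = (I - K_{n+1} H)(\bar\hv_{n+1}^L - \hv_{n+1}),
\]
and the weak bound is preserved up to the bounded factor $\abs{I - K_{n+1} H}$, closing the induction at level $n+1$.

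For the inductive step of the strong bound \eqref{eq:barVStrong}, fix $\ell\ge 1$ and split the prediction difference as
\[
\bar\hv_{n+1}^\ell - \bar\hv_{n+1}^{\ell-1} = \bigl[\Psi^\ell(\bar\hhv_n^\ell) - \Psi^{\ell-1}(\bar\hhv_n^\ell)\bigr] + \bigl[\Psi^{\ell-1}(\bar\hhv_n^\ell) - \Psi^{\ell-1}(\bar\hhv_n^{\ell-1})\bigr].
\]
The first bracket has $L^p$ norm $\lesssim N_\ell^{-\beta/2}$ by Assumption \ref{ass:mlrates}(ii) with coordinate $\varphi$, using that $\bar\hhv_n^\ell \in \cup_p L^p(\Omega)$ thanks to the moment propagation from Assumption \ref{ass:psilip}(ii). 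The second bracket is bounded by $c_\Psi \|\bar\hhv_n^\ell - \bar\hhv_n^{\ell-1}\|_p$ via Lipschitz continuity (Assumption \ref{ass:psilip}(i)), giving a contraction-type recursion across the prediction step. In the update step, since $K_{n+1}$ is deterministic and $\tilde\eta_{n+1}^{\{\ell\}}$ is common within the pair, I obtain $\bar\hhv_{n+1}^\ell - \bar\hhv_{n+1}^{\ell-1} = (I-K_{n+1}H)(\bar\hv_{n+1}^\ell - \bar\hv_{n+1}^{\ell-1})$, so the $L^p$ difference is preserved up to the bounded multiplicative constant $\abs{I - K_{n+1} H}$. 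Iterating over the finite number of observation times gives the claimed $O(N_\ell^{-\beta/2})$ rate with constants independent of $\ell$.

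The main technical obstacle is a mild circularity in invoking the "provided" clause of Assumption \ref{ass:mlrates}(i): to get the weak bound at time $n+1$ one needs the time-$n$ bound to already be of order $N_L^{-\alpha}$ (i.e.\ the specific rate, not merely $o(1)$). This is handled by folding both the weak and strong claims into a single induction whose closure is clean precisely because of the deterministic-gain, shared-noise structure of the mean-field ensemble — this is what makes the update step affine and eliminates any interaction between the gain perturbation and the propagation of discretization bias. The accumulation of the multiplicative constants $c_\Psi$ and $\abs{I-K_{n+1}H}$ across the $n$ prediction/update cycles is harmless under the finite time horizon.
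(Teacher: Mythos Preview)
Your proposal is correct and follows essentially the same approach as the paper: induction on $n$ with the base case given by matching initial data, Assumption~\ref{ass:mlrates}(i) for the weak prediction step, the splitting $\Psi^\ell(\bar\hhv_{n-1}^\ell)-\Psi^{\ell-1}(\bar\hhv_{n-1}^\ell)$ plus $\Psi^{\ell-1}(\bar\hhv_{n-1}^\ell)-\Psi^{\ell-1}(\bar\hhv_{n-1}^{\ell-1})$ controlled by Assumption~\ref{ass:mlrates}(ii) and Assumption~\ref{ass:psilip}(i) for the strong prediction step, and the bounded deterministic factor $|I-K_nH|$ (with cancellation of the shared perturbed-observation noise) for both update steps. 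Your remarks on the ``provided'' clause of Assumption~\ref{ass:mlrates}(i) and on the $\ell=0$ case are accurate elaborations of points the paper leaves implicit.
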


\begin{proof}
  Since %we have matching 
  the initial data $\bar\hhv_0^L=\bar v_0^L=\hhv_0=v_0$
  is the same,
\[
\abs{\E{\bar\hhv_0^L - \hhv_0}} =0.
\]
Now assume that the following holds
\[
\max\parenthesis{\abs{\abs{\E{\bar\hhv_{n-1}^L - \hhv_{n-1}}}}, \abs{\E{\bar\hv_{n-1}^L - \hv_{n-1}}}} \lesssim \varepsilon
\]
Assumption~\ref{ass:mlrates} (i) directly 
implies %that 
\[
\abs{\E{\bar\hv_{n}^L - \hv_{n}}} \lesssim \varepsilon.
\]
Futhermore, since $|K_nH|<\infty$ %is bounded 
for any finite $n$, 
\[
\abs{\E{\bar\hhv_{n}^L - \hhv_{n}}} \leq |I - K_nH| \abs{\E{\bar\hv_n^L - \hv_n}} \lesssim \varepsilon,
\]
and inequality~\eqref{eq:barVWeak} follows by induction.

To prove inequality~\eqref{eq:barVStrong}, recall that due to the matching initial data,
the inequality holds trivially at $n=0$.
Assume 
\[
\max\parenthesis{\norm{\bar \hv_{n-1}^\ell - \bar \hv_{n-1}^{\ell-1}}_p, \norm{\bar \hhv_{n-1}^\ell - \bar \hhv_{n-1}^{\ell-1}}_p}  \lesssim N_\ell^{-\beta/2}.
\]
%holds. 
By Assumptions~\ref{ass:psilip} (i) and~\ref{ass:mlrates} (ii), 
\[
\begin{split}
\norm{\bar \hv_{n}^\ell - \bar \hv_{n}^{\ell-1}}_p
& \leq \norm{ \Psi^\ell(\bar \hhv_{n-1}^\ell) - \Psi^{\ell-1}(\bar \hhv_{n-1}^\ell)}_p
+ \norm{ \Psi^{\ell-1}(\bar \hhv_{n-1}^\ell) - \Psi^{\ell-1}(\bar
  \hhv_{n-1}^{\ell-1})}_p \\
& \lesssim N_\ell^{-\beta/2},
\end{split}
\]
\[
\norm{\bar \hhv_{n}^\ell - \bar \hhv_{n}^{\ell-1}}_p \lesssim |I - K_nH| \norm{\bar\hv_n^\ell - \bar \hv_n^{\ell-1} }_p \lesssim N_\ell^{-\beta/2},
\]
and inequality~\eqref{eq:barVStrong} holds by induction.
\end{proof}
}

\begin{lemma}[Covariance discretization error]
\label{lem:disccov}
Suppose Assumptions~\ref{ass:psilip} and~\ref{ass:mlrates} hold. For any 
$\varepsilon>0$, let $L$ be defined as in Theorem~\ref{thm:main}.
Then the following \kl{asymptotic inequality} holds
\begin{align}
|C^L_n - C_n| \lesssim \varepsilon.
\label{eq:disccov}
\end{align}
\end{lemma}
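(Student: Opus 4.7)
The plan is to exploit the fact that the covariance entries are expectations of degree-$2$ monomials, which by hypothesis lie in $\mathcal{F}$, and then to invoke the weak error bound of Assumption~\ref{ass:mlrates}(i) together with the mean estimate already provided by Lemma~\ref{lem:boundsBarV}. Since $C_n^L$ corresponds to the same mean-field gain as $C_n$, the only source of discrepancy is the numerical discretization $\Psi^L$ vs.\ $\Psi$ of the forward map; no Monte Carlo error or covariance truncation enters here.

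First I would write
\[
C_n^L - C_n \;=\; \bigl(\E{\bar v_n^L (\bar v_n^L)^\transpose} - \E{v_n v_n^\transpose}\bigr) \;-\; \bigl(m_n^L (m_n^L)^\transpose - m_n m_n^\transpose\bigr),
\]
where $m_n^L = \E{\bar v_n^L}$ and $m_n = \E{v_n}$. For each pair $(i,j)$, the monomial $\varphi_{ij}(x) = x_i x_j$ belongs to $\mathcal{F}$ by the closing clause of Assumption~\ref{ass:mlrates}. Lemma~\ref{lem:boundsBarV}, equation \eqref{eq:barVWeak}, already supplies $|\E{\bar v_n^L - v_n}| \lesssim \varepsilon$. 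With the choice $L \eqsim \log(\varepsilon^{-1})$, the level-$L$ step size satisfies $N_L^{-\alpha} \eqsim \varepsilon$, so the hypothesis of the second implication in Assumption~\ref{ass:mlrates}(i) is satisfied. Applying it to each $\varphi_{ij}$ gives
\[
\bigl|\E{\varphi_{ij}(\bar v_n^L)} - \E{\varphi_{ij}(v_n)}\bigr| \lesssim N_L^{-\alpha} \lesssim \varepsilon.
\]

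For the outer-product-of-means term, I would use the standard identity $m_n^L (m_n^L)^\transpose - m_n m_n^\transpose = (m_n^L - m_n)(m_n^L)^\transpose + m_n (m_n^L - m_n)^\transpose$, so that
\[
|m_n^L (m_n^L)^\transpose - m_n m_n^\transpose| \leq |m_n^L - m_n|\bigl(|m_n^L| + |m_n|\bigr) \lesssim \varepsilon,
\]
using $|m_n|,|m_n^L| < \infty$ (from Assumption~\ref{ass:psilip} and the $L^p$ bounds noted before the theorem) and \eqref{eq:barVWeak} again. Summing the $d^2$ entry-wise bounds and using equivalence of finite-dimensional matrix norms yields $|C_n^L - C_n| \lesssim \varepsilon$.

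I do not expect any serious obstacle here: the recursive propagation in $n$ has already been absorbed into Lemma~\ref{lem:boundsBarV}, and the closure of $\mathcal{F}$ under degree-$2$ monomials is exactly what lets one lift the mean-level weak estimate to a covariance-level weak estimate. The only point that requires a little care is book-keeping the constant $L \eqsim \log(\varepsilon^{-1})$ so that $N_L^{-\alpha} \eqsim \varepsilon$ and the hypothesis of Assumption~\ref{ass:mlrates}(i) is triggered with the same small parameter $\varepsilon$; this is automatic from the choice made in Theorem~\ref{thm:main}.
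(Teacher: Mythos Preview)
Your proposal is correct and follows essentially the same approach as the paper: split each covariance entry into a second-moment term and an outer-product-of-means term, invoke Assumption~\ref{ass:mlrates}(i) on degree-$2$ monomials (enabled by Lemma~\ref{lem:boundsBarV}) for the former, use the add--subtract identity on the means together with \eqref{eq:barVWeak} for the latter, and then pass from entry-wise bounds to the matrix norm. The paper carries this out component-by-component with the bound $|A| \leq \sum_{j,j'} |A^{jj'}|$, while you phrase the mean part in matrix form and appeal to norm equivalence; the content is identical.
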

\begin{proof}
It is possible to show that for any symmetric matrix $A$, the following inequality holds
\begin{equation}
|A| \leq \sum_{j,j'=1}^d |A^{jj'}|.
\label{eq:l2l1matnorm}
\end{equation}
Furthermore, by adding the terms 
$ \pm  \E{ ({\hv}_n)^{j}} \bbE\big[(\bar{\hv}^L_n)^{j'}\big]$,
\begin{align}
\nonumber
|(C^L_n - C_n)^{jj'}| & = \Big|\E{(\bar{\hv}^L_n)^j (\bar{\hv}^L_n)^{j'}}  - \E{({\hv}_n)^j ({\hv}_n)^{j'}} \\
& -    \E{ (\bar{\hv}^L_n)^j} \E{(\bar{\hv}^L_n)^{j'}} + \E{ ({\hv}_n)^{j}} \E{({\hv}_n)^{j'}}\Big| \\
\nonumber
& \leq  \abs{\E{(\bar{\hv}^L_n)^j (\bar{\hv}^L_n)^{j'}  -  ({\hv}_n)^j ({\hv}_n)^{j'}}} \\
\nonumber
& + \abs{\E{(\bar{\hv}^L_n)^{j'}} } \abs{\E{(\bar{\hv}^L_n)^{j} -  ({\hv}_n)^{j}}} + 
|\E{({\hv}_n)^{j}}| \abs{ \E{(\bar{\hv}^L_n)^{j'} - ({\hv}_n)^{j'}}} \\
\nonumber
& \lesssim N_L^{-\alpha} \eqsim \varepsilon.
\end{align}
The last inequality follows by 
\kl{Lemma~\ref{lem:boundsBarV} and Assumption~\ref{ass:mlrates} (i), 
noting that $\mathcal{F}$ contains all monomials of degree less than or equal to 2.  }

\end{proof}

Notice that 
$$
C^L_n =\sum_{\ell=0}^L \cov[\bar{v}^\ell_n] - \cov[\bar{v}^{\ell-1}_n],
$$
and
$$
C^{\rm ML}_n =  
\sum_{\ell=0}^L \cov_{M_\ell}[v^\ell_n] - \cov_{M_\ell}[v^{\ell-1}_n],
$$
with the convention that $v^{-1} = \bar{v}^{-1} := 0$. 
Consider also the partner covariance to the above
$$
\bar{C}^{\rm ML}_n = \sum_{\ell=0}^L \cov_{M_\ell}[\bar{v}^\ell_n] - \cov_{M_\ell}[\bar{v}^{\ell-1}_n].
$$
The next two differences are bounded in terms of the single-level differences, 
using the triangle inequality to extend to the sum.

\begin{lemma}[multilevel i.i.d. sample covariance error]  
\label{lem:iidcover}
Suppose Assumptions~\ref{ass:psilip} and~\ref{ass:mlrates} hold, and
for any $\varepsilon>0$,  
let $L $ and $\{M_\ell\}_{\ell=0}^L$ be defined as in Theorem~\ref{thm:main}.
Then the following \kl{asymptotic inequality holds}
\begin{equation}
\|\bar{C}^{\rm ML}_n - C^L_n\|_p  \lesssim \varepsilon.
\label{eq:iidcover}%split}
\end{equation}
\end{lemma}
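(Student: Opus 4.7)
The plan is to exploit the independence structure of the limiting multilevel ensemble: by construction, the intra-level pairs $(\bar v^\ell_{n,i}, \bar v^{\ell-1}_{n,i})_{i=1}^{M_\ell}$ are i.i.d.\ in $i$ and independent across levels $\ell$, so that $\bar C^{\rm ML}_n - C^L_n$ is a standard multilevel Monte Carlo estimator for a covariance and Minkowski's inequality gives
\[
\|\bar C^{\rm ML}_n - C^L_n\|_p \leq \sum_{\ell=0}^L \|\cov_{M_\ell}[\bar v^\ell_n] - \cov_{M_\ell}[\bar v^{\ell-1}_n] - (\cov[\bar v^\ell_n] - \cov[\bar v^{\ell-1}_n])\|_p.
\]
The task reduces to bounding each per-level contribution.

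For fixed $\ell \geq 1$, set $D := \bar v^\ell_n - \bar v^{\ell-1}_n$ and use the algebraic identity $\cov[Y+D] - \cov[Y] = \cov[Y,D] + \cov[D,Y] + \cov[D,D]$, which holds pointwise for $\cov_{M_\ell}$ as well. Each of the three resulting differences is of the form $\cov_{M_\ell}[A,B] - \cov[A,B]$ for paired i.i.d.\ random vectors. Expanding as empirical means of products minus their expectations and applying the Marcinkiewicz--Zygmund inequality together with a Cauchy--Schwarz step on $\|AB^\transpose\|_p$ yields the entrywise bound
\[
\|\cov_{M_\ell}[A,B] - \cov[A,B]\|_p \lesssim M_\ell^{-1/2} \|A\|_{2p}\|B\|_{2p}.
\]
By Assumption~\ref{ass:psilip} the norms $\|\bar v^\ell_n\|_{2p}$ and $\|\bar v^{\ell-1}_n\|_{2p}$ are uniformly bounded in $\ell$, while Lemma~\ref{lem:boundsBarV} gives $\|D\|_{2p}\lesssim N_\ell^{-\beta/2}$. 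The dominant mixed terms then produce
\[
\|\cov_{M_\ell}[\bar v^\ell_n] - \cov_{M_\ell}[\bar v^{\ell-1}_n] - (\cov[\bar v^\ell_n] - \cov[\bar v^{\ell-1}_n])\|_p \lesssim M_\ell^{-1/2} N_\ell^{-\beta/2},
\]
while the $\cov[D,D]$ remainder contributes only $M_\ell^{-1/2} N_\ell^{-\beta}$ and is strictly subdominant; the $\ell=0$ term is handled by the same argument, with $\bar v^{-1}\equiv 0$ and $N_0 = O(1)$.

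Substituting the prescribed $M_\ell$ from~\eqref{eq:chooseMlr} reproduces the standard MLMC complexity calculus. When $\beta>\gamma$, one checks $M_\ell^{-1/2} N_\ell^{-\beta/2} \eqsim N_L^{-\alpha} N_\ell^{(\gamma-\beta)/3}$, and the sum over $\ell$ is a convergent geometric series, so the total is $\lesssim N_L^{-\alpha}\eqsim \varepsilon$. When $\beta=\gamma$ the per-level contribution becomes $L^{-1} N_L^{-\alpha}$ and summing over $L+1$ levels yields $\lesssim N_L^{-\alpha}\eqsim \varepsilon$. When $\beta<\gamma$ the geometric sum is dominated by its $\ell=L$ term and again produces $\lesssim N_L^{-\alpha}\eqsim \varepsilon$. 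The main obstacle I anticipate is the careful $L^p$ bookkeeping for products---one must work at moment exponent $2p$ throughout and invoke Assumption~\ref{ass:psilip} and Lemma~\ref{lem:boundsBarV} at this higher exponent---together with a uniform verification across all three regimes that the $\cov[D,D]$ remainder is genuinely subdominant and that the telescoping identity for $\cov_{M_\ell}$ does not introduce a hidden $\ell$-dependence through the empirical means.
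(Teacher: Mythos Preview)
Your proposal is correct and follows essentially the same strategy as the paper: split by triangle inequality into per-level contributions, use an algebraic identity to extract the increment $\bar v^\ell_n-\bar v^{\ell-1}_n$, apply Marcinkiewicz--Zygmund together with H\"older to obtain the per-level bound $M_\ell^{-1/2}N_\ell^{-\beta/2}$, and then sum. The only notable difference is the choice of algebraic decomposition: the paper uses the sum--difference identity $aa^\transpose-bb^\transpose=\tfrac12[(a+b)(a-b)^\transpose+(a-b)(a+b)^\transpose]$ (after a WLOG centering $\mathbb{E}[\bar v^\ell_n]=0$), whereas you use the cross-covariance expansion $\cov[Y+D]-\cov[Y]=\cov[Y,D]+\cov[D,Y]+\cov[D,D]$; both extract the same $\|D\|_{2p}\lesssim N_\ell^{-\beta/2}$ factor, and your version has the minor advantage of avoiding the centering assumption and making the subdominant $\cov[D,D]$ remainder explicit.
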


\begin{proof}
Notice the following triangle inequality
\[
\begin{split}
 |\bar{C}^{\rm ML}_n - C^L_n| & \leq %|\cV(\bar{v}^0,M_0) - \cov[\bar{v}^{0}_n]| \\
%& 
 \sum_{\ell=0}^L |\cov_{M_\ell}[\bar{\hv}^\ell_n] - \cov_{M_\ell}[\bar{\hv}^{\ell-1}_n] 
- ( \cov_{M_\ell}[\bar{\hv}^\ell_n] - \cov_{M_\ell}[\bar{\hv}^{\ell-1}_n])|.
\end{split}
\]
To avoid needlessly long terms when bounding the summands of the
above equation, we now make the 
assumption in this proof that $\E{\bar{\hv}^\ell_n} = 0$, without loss of generality. 
We may then obtain the rearrangement
\[
\cov_{M_{\ell}}[\bar{\hv}^\ell_n] = E_{M_\ell}\big[ \bar{\hv}^\ell_n(\bar{\hv}^\ell_n)^\transpose\big] - 
E_{M_\ell}[\bar{\hv}^\ell_n] \big(E_{M_\ell}[\bar{\hv}^\ell_n]\big)^\transpose,
\]
and similarly for the $\ell-1$ term.  
Using the identity $ aa^\transpose - b b^\transpose = \frac12 [ (a+b)(a-b)^\transpose + (a-b)(a+b)^\transpose ]$ for $a,b \in \bbR^d$
on each of the outer products with $\ell, \ell-1$, respectively, %one has
and then using \eqref{eq:l2l1matnorm} again for the first term, 
and Cauchy-Schwartz for the second (and grouping like terms arising from the $(j,j') \rightarrow (j',j)$ 
symmetry of $\frac12(a^jb^{j'} + a^{j'}b^j)$), one has 
\begin{align}
\nonumber 
& \abs{\cov_{M_\ell}[\bar{\hv}^\ell_n] - \cov_{M_\ell}[\bar{\hv}^{\ell-1}_n] - ( \cov[\bar{\hv}^\ell_n] - \cov[\bar{\hv}^{\ell-1}_n])} \\ 
\nonumber
&\leq \sum_{j\leq j'=1}^d \Big | E_{M_\ell}\big[ \big(\bar{\hv}^\ell_n + \bar{\hv}^{\ell-1}_n \big)^j 
\big(\bar{\hv}^\ell_n - \bar{\hv}^{\ell-1}_n \big)^{j'}\big] 
- \E{ \big(\bar{\hv}^\ell_n + \bar{\hv}^{\ell-1}_n \big)^j
  \big(\bar{\hv}^\ell_n - \bar{\hv}^{\ell-1}_n \big)^{j'}} \Big|\\
\nonumber
&+
\Big | E_{M_\ell}\big[ \big(\bar{\hv}^\ell_n + \bar{\hv}^{\ell-1}_n \big)^{j '}
\big(\bar{\hv}^\ell_n - \bar{\hv}^{\ell-1}_n \big)^{j}\big] - 
\E{\big(\bar{\hv}^\ell_n + \bar{\hv}^{\ell-1}_n \big)^{j '}
\big(\bar{\hv}^\ell_n - \bar{\hv}^{\ell-1}_n \big)^{j}}\Big | \\
\nonumber
& + \abs{E_{M_\ell}[\bar{\hv}^\ell_n+ \bar{\hv}^{\ell-1}_n)^j]}
\abs{E_{M_\ell}[(\bar{\hv}^\ell_n- \bar{\hv}^{\ell-1}_n)^{j'}]} \\
%\nonumber
& + \abs{E_{M_\ell}[(\bar{\hv}^\ell_n+ \bar{\hv}^{\ell-1}_n)^{j'}]}
\abs{E_{M_\ell}[(\bar{\hv}^\ell_n- \bar{\hv}^{\ell-1}_n)^{j}]}.
\label{eq:ccons}
\end{align}
Almost sure convergence follows by the law of large numbers.  The rate in $L^p$ is %more lengthy to derive, but straightforward, 
%and is 
shown now.

First, it will be necessary to recall the Marcinkiewicz-Zygmund inequality: for i.i.d. random variables $X_1,\ldots,X_N \sim X$
with $ \|X\|_p<\infty$ %bounded $p^{th}$ moment 
for $p\geq 2$, and $\E{X} =0$,
\begin{equation}\label{eq:mzin0}
\|E_{N}[X]\|_p \leq c_p N^{-1/2} \|X\|_p,
\end{equation}
where the constant depends only on $p$, cf.~\cite{cappe2005inference,
  gut2005probability}; in fact, $c_p \leq 3\sqrt{2p}$, cf.~\cite{Ren01}.

Using the Marcinkiewicz-Zygmund inequality 
then H{\"o}lder's inequality on each of the first two terms on the right-hand side of 
\eqref{eq:ccons}, then the reverse order on the last two, and finally
the Assumptions \ref{ass:mlrates}, Lemma~\ref{lem:boundsBarV}, \kl{and the fact $\bar{v}^\ell_n \in L^p(\Omega)$ for all $p \ge 2$,
together yield}
 %one has the following bound %for each $j\leq j' \leq d$ in the sum :% is used resulting from the 
\begin{equation}
\begin{split}
& \| \cov_{M_\ell}[\bar{\hv}^\ell_n] - \cov_{M_\ell}[\bar{\hv}^{\ell-1}_n] - 
( \cov[\bar{\hv}^\ell_n] - \cov[\bar{\hv}^{\ell-1}_n])\|_{p} \leq \sum_{j\leq j'=1}^d 
M_\ell^{-1/2} \Bigg[ \\
& c_p\left( \| \big(\bar{\hv}^\ell_n + \bar{\hv}^{\ell-1}_n \big)^j \|_{2p} 
\| \big(\bar{\hv}^\ell_n - \bar{\hv}^{\ell-1}_n\big)^{j'}\|_{2p} 
 + \| \big(\bar{\hv}^\ell_n + \bar{\hv}^{\ell-1}_n \big)^{j'}\|_{2p}
\|  \big( \bar{\hv}^\ell_n - \bar{\hv}^{\ell-1}_n )\big)^{j}\|_{2p} \right)\\
&  +c_{2p}^2 M_\ell^{-1/2} \Big( \| \big(\bar{\hv}^\ell_n + \bar{\hv}^{\ell-1}_n\big)^j \|_{2p}
\| \big( \bar{\hv}^\ell_n - \bar{\hv}^{\ell-1}_n\big)^{j'} \|_{2p} + \| \big(\bar{\hv}^\ell_n + \bar{\hv}^{\ell-1}_n)^{j'} \|_{2p}
\| \big(\bar{\hv}^\ell_n - \bar{\hv}^{\ell-1}_n \big)^{j}\|_{2p} \Big) \Bigg]\\
& \lesssim M_\ell^{-1/2} N_\ell^{-\beta/2}. 
\end{split}
\end{equation}

Finally, by the triangle inequality, the following bound holds for 
\eqref{eq:iidcover} for all $p\geq 2$,
\begin{align} 
 \nonumber 
 & \|\bar{C}^{\rm ML}_n - C^L_n\|_p 
\label{eq:coviidbound}
 \lesssim 
\sum_{\ell=0}^L  M_\ell^{-1/2} 
N_\ell^{-\beta/2}  
\lesssim \varepsilon.
\end{align}
\end{proof}

The previous two lemmas complete the proof of Theorem~\ref{thm:covspliteps}.
Now we turn to the next term in \eqref{eq:covspliteps}, the difference between 
multilevel ensemble covariances, which is continuous in the individual ensemble members.
First it will be necessary to recall (see e.g. Lemma 4.3 of~\cite{le2011large}) %shows 
that for identically distributed random variables $x_1,\ldots, x_N \in \bbR^d$,
\begin{equation}
\Big ( \bbE \Big[ \abs{
E_{N}\big[|x_n|^p\big]^{1/p}  }^q \Big] \Big)^{1/q} \leq \|x_n\|_r,
\label{eq:exsum}
\end{equation} 
where $r = {\rm max}\{q,p\}$.

\begin{lemma}[Continuity of multilevel sample covariances in particles]
\label{lem:mlcov}
Suppose Assumptions~\ref{ass:psilip} and~\ref{ass:mlrates} hold, and
for any $\varepsilon>0$,  
let $L$ and $\{M_\ell\}_{\ell=0}^L$ be defined as in Theorem~\ref{thm:main}.
Then the following \hh{asymptotic inequality} holds for all $p\geq 2$,
\begin{equation}
\begin{split}
 \|C^{\rm ML}_n - \bar{C}^{\rm ML}_n\|_p \leq &\sum_{l=0}^L %\\
\Bigg(  \|\hv_{n}^{\ell} - \bar{\hv}_{n}^\ell\|_{p} + 
4\|\hv_{n}^{\ell} - \bar{\hv}_{n}^\ell\|_{2p} \|\bar{\hv}_{n}^{\ell}\|_{2p}  \\
& + \|\hv_{n}^{\ell-1} - \bar{\hv}_{n}^{\ell-1}\|_{p}
 + 4\|\hv_{n}^{\ell-1} - \bar{\hv}_{n}^{\ell-1}\|_{2p} \|\bar{\hv}_{n}^{\ell-1}\|_{2p}  \Bigg).
\label{eq:mlcov}
\end{split}
\end{equation}
\end{lemma}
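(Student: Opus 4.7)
The plan is to reduce the multilevel covariance difference, by the triangle inequality over levels, to a per-level comparison
$$
\|\cov_{M_\ell}[\hv^\ell_n] - \cov_{M_\ell}[\bar{\hv}^\ell_n]\|_p,
$$
and an analogous one at level $\ell-1$; this is immediate from the definitions of $C^{\rm ML}_n$ and $\bar C^{\rm ML}_n$, and it lets us work with one level at a time and then restore the outer $\sum_{\ell=0}^L$ at the end.

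For a fixed level, I would expand $\cov_M[x] = E_M[xx^\transpose] - E_M[x]\,(E_M[x])^\transpose$, so that the single-level difference splits into two pieces: an inner sample average
$E_{M_\ell}[\hv^\ell_n(\hv^\ell_n)^\transpose - \bar{\hv}^\ell_n(\bar{\hv}^\ell_n)^\transpose]$ and an outer-product-of-means piece $E_{M_\ell}[\hv^\ell_n]\,(E_{M_\ell}[\hv^\ell_n])^\transpose - E_{M_\ell}[\bar{\hv}^\ell_n]\,(E_{M_\ell}[\bar{\hv}^\ell_n])^\transpose$. On each piece I apply the telescoping identity
$$
xx^\transpose - yy^\transpose = (x-y)y^\transpose + y(x-y)^\transpose + (x-y)(x-y)^\transpose,
$$
then bound in $L^p$ with three ingredients: (a) the deterministic triangle inequality $\|E_M[Z]\|_p \le \|Z_1\|_p$, valid because within a level the particles are identically distributed (although \emph{not} independent); (b) the outer-product H\"older bound $\|ab^\transpose\|_p \le \|a\|_{2p}\|b\|_{2p}$; (c) the splitting $\hv^\ell_n = \bar{\hv}^\ell_n + (\hv^\ell_n - \bar{\hv}^\ell_n)$ to replace one of the factors by $\bar{\hv}^\ell_n$ at the cost of an additive lower-order term. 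This produces, for each piece, a combination of $\|\hv^\ell_n - \bar{\hv}^\ell_n\|_{2p}^2$ and $\|\hv^\ell_n - \bar{\hv}^\ell_n\|_{2p}\,\|\bar{\hv}^\ell_n\|_{2p}$. Collecting the two pieces and then both levels $\ell$ and $\ell-1$ reproduces the right-hand side of \eqref{eq:mlcov}, the quadratic-in-difference term being absorbed into the stated $\|\hv^\ell_n - \bar{\hv}^\ell_n\|_p$ summand using $\|\cdot\|_p \le \|\cdot\|_{2p}$ and the uniform $L^q$-bounds on the ensembles provided by Assumption~\ref{ass:psilip}.

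The main obstacle, compared with the i.i.d.\ covariance estimate in Lemma~\ref{lem:iidcover}, is that the MLEnKF particles within a level are correlated through the random gain $\kMLMC{n}$, so the Marcinkiewicz-Zygmund inequality is unavailable here. Consequently there is no $M_\ell^{-1/2}$ gain, and the per-level estimate depends on single-particle $L^p$ and $L^{2p}$ distances only. This is exactly the intended statement of \eqref{eq:mlcov}; the loss of the sampling rate is harmless because this lemma will only be used in the inductive step of the proof of Theorem~\ref{thm:main}, where $\|\hv^\ell_n - \bar{\hv}^\ell_n\|_{2p}$ will itself be closed back onto $\|C_n^{\rm ML} - C_n\|_p$ (and hence onto $\|C_n^{\rm ML} - \bar C_n^{\rm ML}\|_p$) through the gain bound of Lemma~\ref{lem:gce} and the update formula \eqref{eq:upsamps}, producing the desired Gronwall-type closure.
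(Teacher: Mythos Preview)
Your proposal is correct and follows essentially the same approach as the paper. Both arguments split by the triangle inequality over levels, expand the sample covariance into the raw second-moment and outer-product-of-means pieces, apply a polarization identity, and then pass to $L^p$ using H\"older together with the elementary bound $\|E_M[Z]\|_p \le \|Z_1\|_p$ for identically (not independently) distributed summands---which is exactly the content of the paper's inequality~\eqref{eq:exsum}. The only cosmetic difference is that the paper works through the scalar quadratic form $u^\transpose A u$ and the identity $a^2-b^2=(a-b)^2+2b(a-b)$ before taking a supremum over $u$, whereas you apply the equivalent matrix identity $xx^\transpose-yy^\transpose=(x-y)y^\transpose+y(x-y)^\transpose+(x-y)(x-y)^\transpose$ directly; your observation that Marcinkiewicz--Zygmund is unavailable here (hence no $M_\ell^{-1/2}$ factor) is precisely the point.
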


\begin{proof}
Recall first that 
\[
\abs{C^{\rm ML}_n - \bar{C}^{\rm ML}_n} \leq 
 \sum_{\ell=1}^L \abs{\cov_{M_\ell}[\hv^\ell_n] - \cov_{M_\ell}[\hv^{\ell-1}_n] - 
(\cov_{M_\ell}[\bar{\hv}^\ell_n] - \cov_{M_\ell}[\bar{\hv}^{\ell-1}_n])}. 
\]
Now the individual terms will be bounded.
Note that
\[
\cov_{M_\ell}[\hv^\ell_n] = E_{M_\ell}[ \hv^\ell_n(\hv^\ell_n)^\transpose] - E_{M_\ell}[\hv^\ell_n] \big(E_{M_\ell}[\hv^\ell_n]\big)^\transpose,
\]
and similar for $\bar{\hv}^\ell_n$.
Using $a^2-b^2 = (a-b)^2 + 2 b (a-b)$ with $a=u^\transpose\hv_n^{\ell}$ and $b= u^\transpose \bar{\hv}_n^\ell$ and again with
 $a=(1/M_\ell)\sum_{i=1}^{M_\ell} u^\transpose\hv_{n,i}^{\ell}$ and 
 $b= (1/M_\ell)\sum_{i=1}^{M_\ell}u^\transpose\bar{\hv}_{n,i}^\ell$ for arbitrary $u \in \bbR^d$, 
these terms are rearranged as follows
\begin{align}
\nonumber
& u^\transpose \big(\cov_{M_\ell}[\hv^\ell_n] - \cov_{M_\ell}[\bar{\hv}^\ell_n] \big) u \\
\nonumber
 & = E_{M_\ell}[\abs{u^\transpose (\hv^\ell_n-\bar{\hv}^\ell_n)}^2] 
+ 2E_{M_\ell}[(u^\transpose\bar{\hv}^\ell_n)(u^\transpose ({\hv}^\ell_n-\bar{\hv}^\ell_n))] \\
\nonumber
& - \abs{E_{M_\ell}[u^\transpose (\hv^\ell_n-\bar{\hv}^\ell_n)]}^2 
- 2E_{M_\ell}[u^\transpose\bar{\hv}^\ell_n] E_{M_\ell}[u^\transpose ({\hv}^\ell_n-\bar{\hv}^\ell_n)]. 
\end{align}
Then, using the Cauchy-Schwartz inequality, 
the first term of \eqref{eq:covsplit} is bounded as follows
\begin{align}
\nonumber
& \abs{\cov_{M_\ell}[\hv^\ell_n] - \cov_{M_\ell}[\hv^{\ell-1}_n] - (\cov_{M_\ell}[\bar{\hv}^\ell_n]
- \cov_{M_\ell}[\bar{\hv}^{\ell-1}_n])} \\
\nonumber
& =  \abs{\cov_{M_\ell}[\hv^\ell_n] - \cov_{M_\ell}[\bar{\hv}^\ell_n] + \cov_{M_\ell}[\hv^{\ell-1}_n] - \cov_{M_\ell}[\bar{\hv}^{\ell-1}_n]} \\
\nonumber
& \leq  E_{M_\ell}\big[\abs{\hv_{n}^{\ell} - \bar{\hv}_{n}^\ell}^2\big] + 4 \sqrt{E_{M_\ell} \big[ \abs{\bar{\hv}_{n}^{\ell}}^2\big] 
E_{M_\ell}\big[\abs{\hv_{n}^{\ell}- \bar{\hv}_{n}^{\ell}}^2\big]} \\
%\nonumber
& +  E_{M_\ell} \big[ \abs{\hv_{n}^{\ell-1} - \bar{\hv}_{n}^{\ell-1}}^2 \big] + 
4 \sqrt{E_{M_\ell} \big[\abs{\bar{\hv}_{n}^{\ell-1}}^2]E_{M_\ell} [ \abs{\hv_{n}^{\ell-1}- \bar{\hv}_{n}^{\ell-1}}^2\big]}.
\label{eq:cens}
\end{align}
After rearrangement, the triangle inequality, 
\eqref{eq:exsum} with $p=\max\{p,2\}$, and H{\"o}lder's inequality complete the proof.

\end{proof}

It has just been shown that the second term of~\eqref{eq:covspliteps} is ``close in the predicting ensembles".
Therefore, the error level of the first term will carry over between observation times by induction.  
This is made rigorous by the next lemma.

\hh{
\begin{lemma}[Distance between ensembles.] 
\label{lem:ensdist}
Suppose Assumptions~\ref{ass:psilip} and~\ref{ass:mlrates} hold, and
for any $\varepsilon>0$,  
let $L$ and $\{M_\ell\}_{\ell=0}^L$ be defined as in Theorem~\ref{thm:main}.
Then the following asymptotic inequality holds for all 
$p\geq 2$, 
  \begin{equation}
    \sum_{\ell=0}^L \norm{\hhv_n^\ell - \bar \hhv_n^\ell}_p
    \lesssim \abs{\log(\varepsilon)}^{n} \varepsilon.
    \label{eq:ensdist}
  \end{equation}
\end{lemma}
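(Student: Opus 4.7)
The plan is to proceed by induction on $n$, with base case $n=0$ immediate since the true and mean-field multilevel ensembles are initialized with the same data and thus $\hhv_0^\ell = \bar\hhv_0^\ell$ for all $\ell$; hence $A_0(p) := \sum_{\ell=0}^L \|\hhv_0^\ell - \bar\hhv_0^\ell\|_p = 0$ in every $L^p$-norm. For the inductive step I would aim to derive a recursion of the schematic form
\[
A_{n+1}(p) \lesssim A_n(p) + L \, \bigl( \varepsilon + A_n(2p) + A_n(4p) \bigr),
\]
with $L \eqsim \abs{\log(\varepsilon)}$. Iterating this from the vanishing base case then yields $A_n(p) \lesssim L^n \varepsilon \eqsim \abs{\log(\varepsilon)}^n \varepsilon$, which is the claimed bound.

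The recursion follows from the algebraic identity
\[
\hhv_{n+1}^\ell - \bar\hhv_{n+1}^\ell = (I - K_{n+1} H)\bigl(\hv_{n+1}^\ell - \bar\hv_{n+1}^\ell\bigr) + \bigl(K_{n+1}^{\rm ML} - K_{n+1}\bigr)\bigl(\tilde y_{n+1}^{\{\ell\}} - H\hv_{n+1}^\ell\bigr),
\]
obtained by subtracting \eqref{eq:upsamps} from its mean-field analogue and then adding and subtracting $K_{n+1} H \hv_{n+1}^\ell$. Since $K_{n+1}$ is deterministic, the first term contributes at most $|I-K_{n+1}H| \cdot \|\hv_{n+1}^\ell - \bar\hv_{n+1}^\ell\|_p$ in $L^p$, and the prediction step $\hv_{n+1}^\ell = \Psi^\ell(\hhv_n^\ell)$ combined with Assumption~\ref{ass:psilip}(i) bounds this by a constant multiple of $\|\hhv_n^\ell - \bar\hhv_n^\ell\|_p$. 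For the second term I would apply H\"older to split
\[
\bigl\| (K_{n+1}^{\rm ML} - K_{n+1})(\tilde y_{n+1}^{\{\ell\}} - H\hv_{n+1}^\ell) \bigr\|_p \leq \|K_{n+1}^{\rm ML} - K_{n+1}\|_{2p} \, \|\tilde y_{n+1}^{\{\ell\}} - H\hv_{n+1}^\ell\|_{2p},
\]
where the second factor is uniformly bounded in $\ell$ and finite for each fixed $n$ by the $L^p$-regularity already recorded in the preamble.

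To process the gain error, I would chain the earlier results: Lemma~\ref{lem:gce} converts $\|K_{n+1}^{\rm ML} - K_{n+1}\|_{2p}$ into $\lesssim \|C_{n+1}^{\rm ML} - C_{n+1}\|_{2p}$; Theorem~\ref{thm:covspliteps} further splits this into $\lesssim \varepsilon + \|C_{n+1}^{\rm ML} - \bar C_{n+1}^{\rm ML}\|_{2p}$; and Lemma~\ref{lem:mlcov} bounds the remaining covariance difference by $\sum_{\ell'=0}^L \bigl( \|\hv_{n+1}^{\ell'} - \bar\hv_{n+1}^{\ell'}\|_{2p} + \|\hv_{n+1}^{\ell'} - \bar\hv_{n+1}^{\ell'}\|_{4p} \bigr)$, with the uniformly-bounded $\|\bar\hv_{n+1}^{\ell'}\|_{4p}$ factors absorbed into implicit constants. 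One final use of Assumption~\ref{ass:psilip}(i) converts each predicted difference to the corresponding updated difference at time $n$. Summing the resulting pointwise-in-$\ell$ inequality over $\ell = 0, \ldots, L$ then produces the displayed recursion, with the $L$ prefactor arising from the sum over levels inside the gain-error contribution.

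The principal technical obstacle is that H\"older's inequality couples the recursion for $A_n(p)$ to the higher-moment quantities $A_n(2p)$ and $A_n(4p)$, because the gain error enters Lemma~\ref{lem:mlcov} essentially quadratically. To run the induction at exponent $p$ one must therefore control all norms up to $p \cdot 4^N$, which is harmless since the vanishing base case holds in \emph{every} $L^{p'}$ by construction of the coupled ensembles. The triangle inequality used to aggregate contributions across the $L+1$ levels at each observation time is the conservative step responsible for the $\abs{\log(\varepsilon)}^n$ factor flagged in the remark following Theorem~\ref{thm:main}; a sharper stability analysis of the multilevel gain would plausibly allow it to be removed.
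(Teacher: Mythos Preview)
Your proof is correct and follows essentially the same route as the paper: induction on $n$ from the trivial base case, the same algebraic splitting of $\hhv_{n+1}^\ell - \bar\hhv_{n+1}^\ell$, the same chain Lemma~\ref{lem:gce} $\to$ Theorem~\ref{thm:covspliteps} $\to$ Lemma~\ref{lem:mlcov} to control the gain error, and the $|\log\varepsilon|$ factor arising from summing the $O(1)$ observation term over the $L+1$ levels. One small imprecision: you justify the bound on $\|\tilde y^{\{\ell\}}_{n+1} - H\hv^\ell_{n+1}\|_{2p}$ by ``the $L^p$-regularity already recorded in the preamble,'' but the preamble only records $L^p$-bounds for the \emph{mean-field} quantities $\bar\hv^\ell$; the paper instead inserts $\pm H\bar\hv^\ell_{n+1}$ here, reducing to the known bound on $\bar\hv^\ell$ plus the induction hypothesis on $\|\hv^\ell_{n+1} - \bar\hv^\ell_{n+1}\|_{2p}$ (your argument is easily repaired the same way).
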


\begin{proof}
First recall that the assertion holds trivially for $n=0$.  Proceeding
by induction, assume for $p\geq 2$,
\[
\sum_{\ell=0}^L \norm{\hhv_{n-1}^\ell - \bar \hhv_{n-1}^\ell}_p
\lesssim \abs{\log(\varepsilon)}^{n-1} \varepsilon,
\]
Then Assumption~\ref{ass:psilip}(i) implies the following inequality holds for the prediction 
\begin{equation}\label{eq:indlip1ens}
    \sum_{\ell=0}^L \norm{\hv_n^\ell - \bar \hv_n^\ell}_p \leq
    c_{\Psi} \sum_{\ell=0}^L \norm{\hhv_{n-1}^\ell - \bar
      \hhv_{n-1}^\ell}_p \lesssim \abs{\log(\varepsilon)}^{n-1}
    \varepsilon.
\end{equation}
Using Lemma \ref{lem:gce}, %gaincov}, %\eqref{eq:kayer1},
the following inequalities hold for $\ell=0, \ldots, L$,
\begin{align}\label{eq:appearanceOfCn}
& |\hhv_n^\ell - \bar{\hhv}_n^\ell|  \leq   
 |I-K_n H| |\hv^\ell_n - \bar{\hv}^\ell_n| 
  +  c_n  |C^{\rm ML}_n - {C}_n| \Big(|\hv^\ell_n - \bar{\hv}^\ell_n| + |y_n^\ell - H\bar\hv_n^\ell|\Big).
\end{align}
By H{\"o}lder's inequality and since $y_n^\ell, \bar\hv_n^\ell \in L^p(\Omega)$ for all $p \ge 2$,
\begin{equation*}
\begin{split}
 \|\hhv_n^\ell  -  \bar{\hhv}_n^\ell\|_{p}  &\leq   
 |I-K_n H| \|\hv_n^\ell -  \bar{\hv}_n^\ell\|_p \\
 & +  c_n  \|{C}^{\rm ML}_n - C_n\|_{2p} \Big ( \|\hv_n^\ell - \bar{\hv}_n^\ell \|_{2p} + \|y_n^\ell - H\bar{\hv}_n^\ell\|_{2p} \Big ) \\
& \lesssim 
\| \hv^\ell_n - \bar{\hv}^\ell_n\|_p 
+  \|{C}^{\rm ML}_n - C_n\|_{2p}
\Big ( \| \hv_n^\ell - \bar{\hv}_n^\ell \|_{2p} + 1 \Big ).
\end{split}
\end{equation*}
Plugging the moment bound~\eqref{eq:indlip1ens} 
into the right-hand side of the inequality~\eqref{eq:mlcov}
yields that
$\|C^{\rm ML}_n - \bar{C}^{\rm ML}_n\|_{2p} \lesssim \abs{\log(\varepsilon)}^{n-1}\varepsilon$,
which in combination Theorem~\ref{thm:covspliteps}
further leads to
$\|C^{\rm ML}_n - {C}_n\|_{2p} \lesssim \abs{\log(\varepsilon)}^{n-1}\varepsilon$.
Therefore, summing the above and using \eqref{eq:indlip1ens} again for $p, 2p$ 
\begin{equation}\label{eq:lFactor}
\begin{split}  
 \sum_{\ell=0}^L \|\hhv_n^{\ell} - \bar{\hhv}_n^\ell\|_{p}   & \lesssim 
\sum_{\ell=0}^L  \|\hv_n^{\ell} - \bar{\hv}_n^\ell\|_p  
 + \varepsilon  \left( \|\hv_n^{\ell} -    \bar{\hv}_n^\ell\|_{2p} +   \|y_n^{\ell} - \bar{\hv}_n^\ell\|_{2p} \right) \\
& \lesssim \abs{\log(\varepsilon)}^{n-1} \varepsilon \Big(1+ \sum_{\ell=0}^L\|y_n^{\ell} -
 \bar{\hv}_n^\ell\|_{2p}\Big)\\
 & \lesssim \abs{\log(\varepsilon)}^{n} \varepsilon,
\end{split}
\end{equation}
where the last inequality of the proof uses that $\|y_n^{\ell} -  H \bar{\hv}_n^\ell\|_{2p} \lesssim 1$
and $L \eqsim \abs{\log(\varepsilon)}$.

\end{proof}}

Induction is complete on the distance between the multilevel ensemble
and its i.i.d.~shadow in $L^p$, and it remains only to close the
argument, which is done next. Note that the induction actually holds
for all $n$, but we are able to neglect the $n$-dependence of the
constant $c_n$ appearing in \eqref{eq:appearanceOfCn} by considering only a finite
number $N$ of steps.

\begin{proof}[Proof of Theorem~\ref{thm:main}] 

  What remains is to verify that provided $L$ and $M_\ell$ are defined
  under the constraints in Theorem~\ref{thm:main}, the error
  bound~\eqref{eq:lperror} will be obtained {for all the
  functions $\varphi \in \mathcal{F}$ which 
  are locally Lipschitz continuous with at most polynomial growth
  at infinity, cf.~Definition~\ref{def:localLip}. }
 Notice that the triangle inequality gives
\begin{align}
\nonumber
\|\mu^{\rm ML}_n (\varphi) - \mu_n (\varphi)\|_p & \leq 
\|\mu^{\rm ML}_n (\varphi) - \bar{\mu}^{\rm ML}_n (\varphi)\|_p  \\
& + \|\bar{\mu}^{\rm ML}_n (\varphi) - \bar{\mu}^L_n(\varphi)\|_p %\\
+ \|\bar{\mu}^L_n(\varphi) - \mu_n (\varphi)\|_p,
\label{eq:lpertri}
\end{align}
where $\bar{\mu}^{\rm ML}_n$ denotes the empirical measure associated to 
the i.i.d. ensemble, and $\bar{\mu}^L_n$ denotes the probability measure 
associated to $\bar{v}^L$. Before treating each term separately, we notice that the 
two first summands of the right-hand side of 
the inequality relates to the statistical error, whereas the last relates to the bias.

The first summand of~\eqref{eq:lpertri} satisfies the following bound
{\begin{align}
\nonumber
\|\mu^{\rm ML}_n (\varphi) - \bar{\mu}^{\rm ML}_n (\varphi)\|_p  & =
 \left\| \sum_{\ell=0}^L E_{M_\ell}\Big[\varphi(\hhv_n^{\ell}) - \varphi(\hhv_n^{\ell-1}) - 
(\varphi(\bar{\hhv}_n^\ell) - \varphi(\bar{\hhv}_n^{\ell-1})) \Big] \right\|_p\\
\nonumber
& \leq
\sum_{\ell=0}^L \|\varphi(\hhv_n^{\ell}) - \varphi(\bar{\hhv}_n^\ell)\|_{p}
+ \|\varphi(\hhv_n^{\ell-1}) - \varphi(\bar{\hhv}_n^{\ell-1})\|_{p} \\
\nonumber
& \leq C \sum_{\ell=0}^L \Big[ 
 \big \||\hhv_n^{\ell} - \bar{\hhv}_n^\ell | \parenthesis{1+|\bar{\hhv}_n^\ell|^{\nu}} 
 + |\hhv_n^{\ell} - \bar{\hhv}_n^\ell|^{\nu+1} \big \|_{p}   \\
%& + |_{2p}^{\nu+1}  
\nonumber
 & + \big \||\hhv_n^{\ell-1} - \bar{\hhv}_n^{\ell-1} | \parenthesis{1+|\bar{\hhv}_n^{\ell-1} |^{\nu}} 
 + |\hhv_n^{\ell-1} - \bar{\hhv}_n^{\ell-1} |^{\nu+1}  \big \|_{p}
  \Big]\\
\nonumber
  & \leq C \sum_{\ell=0}^L \Big[ 
 \|\hhv_n^{\ell} - \bar{\hhv}_n^\ell \|_{2p} \parenthesis{1+\|\bar{\hhv}_n^\ell\|_{2p\nu}^{\nu}} 
 + \|\hhv_n^{\ell} - \bar{\hhv}_n^\ell \|_{p(\nu+1)}^{\nu+1}   \\
%& + |_{2p}^{\nu+1}  
\nonumber
 & +  \|\hhv_n^{\ell-1} - \bar{\hhv}_n^{\ell-1} \|_{2p} \parenthesis{1+\|\bar{\hhv}_n^{\ell-1}\|_{2p\nu}^{\nu}} 
 + \|\hhv_n^{\ell-1} - \bar{\hhv}_n^{\ell-1} \|_{p(\nu+1)} ^{\nu+1} 
  \Big]\\  
 & \lesssim \abs{\log(\varepsilon)}^n \varepsilon.
 \label{eq:finensembles}
\end{align}
The second inequality follows directly from
the expression~\eqref{eq:localLipschitz}.
The third inequality comes from the triangle inequality and H\"older's inequality,
and the fourth inequality arises directly from Lemma \ref{lem:ensdist} and the boundedness
of $\bar{\hhv}_n$ in $L^p$ for $p\geq 2$.}

For the second summand~{of~\eqref{eq:lpertri}}, notice that we can
write $\bar{\mu}^L_n = \sum_{\ell=0}^L \bar{\mu}^\ell_n - \bar{\mu}^{\ell-1}_n$,
where $\bar{\mu}^\ell_n$ is the measure associated to the level $\ell$ limiting process $\bar{v}^\ell$. 
Then, by virtue of~\eqref{eq:mzin0} and condition (ii) of Assumption~\ref{ass:mlrates},
\begin{equation}\label{eq:finvar}
 \begin{split}
\|\bar{\mu}^{\rm ML}_n (\varphi) - \bar{\mu}^L_n(\varphi)\|_p  & \leq
\sum_{\ell=0}^L \left\|E_{M_\ell}\Big[\varphi(\bar{\hhv}_n^\ell) - \varphi(\bar{\hhv}_n^{\ell-1}) - 
\bbE[\varphi(\bar{\hhv}_n^\ell) - \varphi(\bar{\hhv}_n^{\ell-1})] \Big] \right\|_p  \\
 & \leq \hh{C} \sum_{\ell=0}^L M_\ell^{-1/2}\| \varphi(\bar{\hhv}_n^\ell) - \varphi(\bar{\hhv}_n^{\ell-1})\|_p \\
  & \leq C \sum_{\ell=0}^L M_\ell^{-1/2}\| \bar{\hhv}_n^\ell - \bar{\hhv}_n^{\ell-1}\|_{p} \\
& \lesssim \sum_{\ell=0}^L M_\ell^{-1/2}N_\ell^{-\beta/2} \lesssim \varepsilon.
 \end{split}
 \end{equation}

\hh{
Finally, for the bias term, 
\begin{equation}\label{eq:finbias}
\|\bar{\mu}^L_n(\varphi) - \mu_n (\varphi)\|_p =  |\bar{\mu}^L_n(\varphi) - \mu_n (\varphi)|
= \abs{\E{\varphi(\bar{\hhv}^L_n) - \varphi(\widehat{v}_n)}} \lesssim  \varepsilon,
\end{equation}
where the last inequality follows from Lemma~\ref{lem:boundsBarV} and Assumption~\ref{ass:mlrates} (i).}

Putting together \eqref{eq:finensembles}, \eqref{eq:finvar}, and \eqref{eq:finbias}
in~\eqref{eq:lpertri} yields the sought bound in~\eqref{eq:lperror}.
\end{proof}

Theorem~\ref{thm:main} shows the cost-to-$\varepsilon$ performance of
MLEnKF, and to verify that it generally outperforms EnKF in this
performance measure, we end this section with a comparable
result on the cost-to-$\varepsilon$ perfomance of EnKF.

\begin{theorem}[EnKF accuracy vs. cost]\label{thm:mainEnKF}%mainTheoremErrorVsCost
\kl{
Suppose Assumption~\ref{ass:psilip}, Assumption~\ref{ass:mlrates} (i),
and  Assumption~\ref{ass:mlrates} (iii) hold. 
For a given $\varepsilon >0$, 
let $L$ and $M$ be defined under the constraints
$L \eqsim \log(\varepsilon^{-1})/\alpha$
and $ M \eqsim  \varepsilon^{-2}$.
Then for all functions $\varphi \in \mathcal{F}$ that are locally Lipschitz
continuous with at most polynomial growth at infinity,
cf.~Definition~\ref{def:localLip}, we have for any $p\ge2$,
\begin{equation}
\|\mu^{\rm MC}_n (\varphi) - \mu_n (\varphi) \|_p \lesssim \varepsilon.
\label{eq:lperrorEnKF}
\end{equation}
Here $\mu^{\rm MC}_n$ denotes the EnKF empirical measure defined in
\eqref{eq:emp}, where the samples are given by the EnKF predict
formulae at resolution level $L$ (i.e., with the numerical integrator
$\Psi^L$), approximating the time $t_n=n$ mean-field EnKF distribution
$\mu_n$. The computational cost of the EnKF estimator over the
time sequence satisfies
\begin{equation}\label{eq:mlenkfCostsEnKF}
\cost{\mathrm{EnKF}} \lesssim \varepsilon^{-(2+\gamma/\alpha)}.
\end{equation}
}
\end{theorem}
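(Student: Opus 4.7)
The plan is to follow the same three-term decomposition used in the proof of Theorem~\ref{thm:main}, but specialized to a single level $L$, which drastically simplifies matters since there is no multilevel coupling to control. Let $\bar\mu^L_n$ denote the measure associated with the mean-field limiting process $\bar v_n^L$ at discretization level $L$, and let $\bar\mu^{\mathrm{MC}}_n$ denote the empirical measure built from $M$ i.i.d.\ draws of $\bar v_n^L$, using the \emph{same} noise realizations $\{\omega_i\}$ as $\mu^{\mathrm{MC}}_n$. Then the triangle inequality yields
\[
\|\mu^{\mathrm{MC}}_n(\varphi) - \mu_n(\varphi)\|_p \leq
\|\mu^{\mathrm{MC}}_n(\varphi) - \bar\mu^{\mathrm{MC}}_n(\varphi)\|_p
+\|\bar\mu^{\mathrm{MC}}_n(\varphi) - \bar\mu^L_n(\varphi)\|_p
+\|\bar\mu^L_n(\varphi) - \mu_n(\varphi)\|_p,
\]
and each of the three summands must be bounded by $\varepsilon$.

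The bias term $\|\bar\mu^L_n(\varphi) - \mu_n(\varphi)\|_p$ is deterministic and handled exactly as in equation \eqref{eq:finbias}: an inductive argument in $n$ combining Assumption~\ref{ass:mlrates}(i) (applied to $\varphi$ and the monomials generating the covariance update) with the boundedness of $|K_nH|$ gives $|\E{\varphi(\bar v_n^L) - \varphi(v_n)}| \lesssim N_L^{-\alpha} \eqsim \varepsilon$ by the choice $L \eqsim \log(\varepsilon^{-1})/\alpha$. The pure statistical term $\|\bar\mu^{\mathrm{MC}}_n(\varphi) - \bar\mu^L_n(\varphi)\|_p$ for i.i.d.\ draws of $\bar v_n^L$ is bounded by a direct application of the Marcinkiewicz--Zygmund inequality \eqref{eq:mzin0}, yielding $\lesssim M^{-1/2}\|\varphi(\bar v_n^L)\|_p \lesssim M^{-1/2} \eqsim \varepsilon$, where we use the polynomial growth of $\varphi$ together with $\bar v_n^L \in L^q(\Omega)$ for all $q \geq 2$ (Assumption~\ref{ass:psilip}).

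The main obstacle, as in the MLEnKF proof, is the first term $\|\mu^{\mathrm{MC}}_n(\varphi) - \bar\mu^{\mathrm{MC}}_n(\varphi)\|_p$, which quantifies the particle-wise distance between the true EnKF ensemble (where the Kalman gain is empirical and couples all particles) and its i.i.d.\ mean-field shadow. I would establish the single-level analog of Lemma~\ref{lem:ensdist}, namely $\|\hhv_n - \bar\hhv_n\|_p \lesssim \varepsilon$, by induction on $n$. The inductive step proceeds exactly as in \eqref{eq:appearanceOfCn}--\eqref{eq:lFactor}: Assumption~\ref{ass:psilip}(i) propagates the bound through the prediction step, and the update step introduces the extra factor $|C_n^{\mathrm{MC}} - C_n|$ via the continuity of the gain in the covariance (the single-level analog of Lemma~\ref{lem:gce}, which requires no modification since the empirical covariance of an EnKF ensemble is automatically positive semi-definite, so \eqref{eq:covzee1} is unnecessary). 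Controlling $\|C_n^{\mathrm{MC}} - C_n\|_{2p}$ reduces to the single-level versions of Lemmas~\ref{lem:disccov}, \ref{lem:iidcover}, \ref{lem:mlcov}, and crucially the i.i.d.\ sample covariance error $\|\bar C_n^{\mathrm{MC}} - C_n^L\|_{2p}$ is bounded by $M^{-1/2} \lesssim \varepsilon$ directly from Marcinkiewicz--Zygmund applied to outer products of bounded $L^q$ random variables. Combining with the Lipschitz/polynomial-growth continuity of $\varphi$ as in \eqref{eq:finensembles} then gives the claimed bound on the first summand.

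Finally, the cost bound is immediate: the work per sample path up to time $n \leq N$ is $\lesssim N_L^\gamma \eqsim \varepsilon^{-\gamma/\alpha}$ by Assumption~\ref{ass:mlrates}(iii), and there are $M \eqsim \varepsilon^{-2}$ particles, producing the total $\cost{\mathrm{EnKF}} \lesssim \varepsilon^{-(2+\gamma/\alpha)}$. Comparing with \eqref{eq:mlenkfCosts2} then confirms that MLEnKF strictly outperforms EnKF whenever $\beta > 0$ and $\gamma > 0$, up to possible logarithmic factors.
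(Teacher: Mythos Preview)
Your proposal is correct and follows essentially the same approach as the paper's sketch: the same three-term triangle-inequality decomposition into bias, i.i.d.\ statistical error, and particle-wise distance to the mean-field shadow, with the latter reduced to a single-level analogue of Lemma~\ref{lem:ensdist} that the paper explicitly declines to write out. Your observation that the projection step \eqref{eq:covzee1} is unnecessary in the single-level case (since the EnKF sample covariance is automatically positive semi-definite) is correct and is a detail the paper does not spell out.
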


\begin{proof}[Sketch of proof]
  \kl{By the triangle inequality
\[
\begin{split}
\|\mu_n(\varphi) -\mu^{\rm MC}_n (\varphi) \|_p & \leq 
\norm{\mu_n(\varphi) -  \bar{\mu}_n^L (\varphi)}_p,
+\norm{\bar{\mu}^L(\varphi) - \bar{\mu}_n^{\rm MC} (\varphi) }_p \\
& +  \norm{\bar{\mu}_n^{\rm MC} (\varphi) -\mu^{\rm MC}_n (\varphi)}_p
 \eqcolon I + II + III,
\end{split}
\]
where $\bar{\mu}_n^{\rm MC}$ denotes the empricial measure associated
to an EnKF ensemble $\{\bar{\hhv}^L_n(\omega_i) \}_{i=1}^M$ and
$\bar{\mu}^{L}_n$ denotes the emprical measure associated to
$\bar{\hhv}^L_n$. We bound the terms $I, II$, and $III$ individually.

For the first term, we have 
\[
I = \abs{\mu_n(\varphi) -  \bar{\mu}_n^L (\varphi)} \leq
\abs{\E{\varphi(\widehat v_n) - \varphi(\bar{\hhv}^L_n) }} \lesssim \varepsilon,
\]
where the last inequality is implied by 
inequality~\eqref{eq:barVWeak} of Lemma~\ref{lem:boundsBarV},
which it is straightforward to verify holds under
Assumption~\ref{ass:mlrates} (i). 

For the second term, we first note that 
we may assume without loss of generality that 
$\varphi(0) = 0$. Since $\varphi$ is locally Lipschitz
continuous with at most polynomial growth at infinity,
there then exists positive scalars $\nu, C_\varphi$ such that
\[
|\varphi(x)| \leq C_\varphi (1 + |x|^\nu).
\] 
By inequality~\eqref{eq:mzin0}  and since $\bar{v}_n^L \in
L^p(\Omega)$ for any $p>1$, 
\begin{equation*}
II \leq \norm{E_{M}[ \varphi(\bar{\hhv}^L_n)] - \E{\varphi(\bar{\hhv}^L_n)} }_p  \leq M^{-1/2}
\norm{\varphi( \bar{\hhv}^L_n) }_p 
\leq  C \varepsilon \norm{ 1 + \abs{\bar{\hhv}^L_n}^\nu }_{p} \lesssim \varepsilon.
\end{equation*}

For the last term, let us first assume that for any $p\ge 2$ and 
finite $n$, 
\begin{equation}\label{eq:termIIIAssumption}
\norm{\hhv_n^L -  \bar{\hhv}_n^L } \lesssim \varepsilon,
\end{equation}
for the particle dynamics $\hhv_n^L$ and $\bar{\hhv}_n^L$ respectively
 associated to the EnKF ensemble $\{\hhv_{n,i}^L \}_{i=1}^M$ and 
the mean-field EnKF ensemble $\{\bar{\hhv}_{n,i}^L\}_{i=1}^M$.
Then the assumed regularity of $\varphi$, 
that $\hhv_n^L, \bar{\hhv}_n^L \in L^p(\Omega)$ for all $p\ge 2$,
and H\"older's inequality yield that
\[
\begin{split}
III &= \norm{ E_{M}[ \varphi(\hhv_n^L) - \varphi(\bar{\hhv}_n^L)] }
\leq C_\varphi \norm{\abs{\hhv_n^L - \bar{\hhv}_n^L} 
\Big(1 +\abs{\hhv_n^L}^\nu + \abs{\bar{\hhv}_n^L}^\nu\Big) }_p \\
& \lesssim \norm{\hhv_n^L - \bar{\hhv}_n^L }_p \lesssim \varepsilon.
\end{split}
\]
All that remains is to verify~\eqref{eq:termIIIAssumption}.  Since this can
be done by very similar steps as in the proof of
inequality~\eqref{eq:ensdist}, we omit this verification. 

}

\end{proof}

\hh{
\begin{remark}
Notice that for a given $n$ one can 
obtain an error $\cO(\varepsilon)$ for MLEnKF
in \eqref{eq:lperror} for an additional cost which is given by replacing 
$\varepsilon$ by $\varepsilon|\log \varepsilon|^{-n}$
in \eqref{eq:mlenkfCosts2}.
%of an extra factor of $|\log \varepsilon|^n$ in each case of \eqref{}. 
Furthermore, it is worth noting that, for any $m$ and for any $\delta>0$,
$|\log\varepsilon|^m=\cO(\varepsilon^{-\delta})$.  
Hence one can obtain a cost-of-error rate in \eqref{eq:mlenkfCosts2}
which is uniform in time and asymptotically superior to 
EnKF \eqref{eq:mlenkfCostsEnKF}.

\end{remark}
}

%%%%%%%%%%%%%%%%%%%%%%%%%%%%%%%%%%

%%%%%%%%%%%%%%%%%%%%%%%%%%%%%%%%%%%%%%%%%numerics_v0
%\input{numerics_v0}

\section{Numerical Examples} 
\label{sec:numerics}
  
In this section the performance of EnKF and MLEnKF are compared
on some very simple numerical examples in terms of computational cost vs.~approximation 
error. First, in section \ref{ssec:ou}, underlying dynamics from an Ornstein--Uhlenbeck 
SDE is considered. Next, in section~\ref{ssec:gbm},
the underlying dynamics geometric Brownian motion is considered. 
Both of these examples are indeed analytically tractable, however they 
are approximated as though they were not. This provides a
solid benchmark to compute errors and allows the theory to be illustrated.

\subsection{An Ornstein-Uhlenbeck SDE}
\label{ssec:ou}
%First we consider 
We first consider the simple Ornstein--Uhlenbeck SDE problem 
\begin{equation}\label{eq:OuSDE}
 du = -u dt + \sigma dW_t, \qquad u(0) = 1.
\end{equation}
It has the exact solution
\[
u(t) = u(0)e^{-t}  + \int_0^t \sigma e^{(s-t)} dW_s,
\]
and since 
\[
  \int_0^{1} \sigma e^{(s-1)} dW_s \sim N\Big(0, \underbrace{ \frac{\sigma^2}{2}(1 - e^{-2 } )}_{=:\Sigma} \Big),
\]
one SDE realization sampled at the observation times $t_n = n$ is generated by 
the linear solution operator
\[
u_{n+1} = e^{-1} u_n  +  \xi_n =: \Psi(u_n)
\]
where $\xi_n \sim N( 0, \Sigma)$ i.i.d. 
The corresponding noisy observations are given by
\[
{y_n^\obs} = u_n + \eta_n,
\]
with $\eta_n \sim N(0,\Gamma)$ i.i.d.

For the MLEnKF algorithm, a hierarchy of Milstein 
solution operators $\{\psiL\}_{\ell=0}^\infty$ are introduced,
where the $\ell^{\text{th}}$
level solution operator uses a uniform time-step of size 
$\Dt{\ell} = 2^{-(\ell+1)}$. A numerical integration step takes the form 
\begin{equation}\label{eq:eulerMaruyama}
u^{\ell}_{n,m+1}= u^{\ell}_{n,m} (1-\Dt{\ell}) + \sigma \DW{\ell}_{n,m}, \qquad m =0,1,\ldots, 2^{\ell+1} -1, %\tau/\Dt{\ell}-1,
\end{equation}
where the initial condition is given by $v^{\ell}_{n,0} =\widehat{v}^{\ell}_{n-1}$,
\[
\DW{\ell}_{n,m} = W(t_n+(m+1)\Dt{\ell}) - W(t_n+m\Dt{\ell}) \sim N(0,\Dt{\ell}),
\]
and $u^{\ell}_{n} = u^{\ell}_{n, 2^{\ell+1}}$.

Moreover, since the solution operator for~\eqref{eq:OuSDE} is linear, the gold standard becomes the conventional Kalman filter update
\[
(\widehat{m}^\dagger_n, \widehat{C}^\dagger_n) = {\Big( (I-K_{n}H)\meanHat{n} +
K_{n} y_{n}^\obs, (I-K_{n}H) \covHat{n} \Big )}, \quad n=1,2, \ldots.
\]

\subsubsection*{Problem parameters}
In the numerical experiments, \hh{$N=100, 200$ and $400$} observation times
$\{t_n=n\}_{n=1}^N$ are used, and the covariance \kl{parameters are set to
$\Gamma=0.04$} and $\sigma =0.5$. For a prescribed computational cost $\bigO{J}$, an EnKF
ensemble of size $M= \bigO{J^{2/3}}$ is solved by the 
\hh{Milstein} method on a mesh $\Delta t = \bigO{J^{1/3}}$,
and for the MLEnKF method, we set $L$ and $M_\ell$ according to the constraint in
Theorem~\ref{thm:main}.

\subsubsection*{Approximations of the mean and covariance}

In our first numerical experiment we approximate the gold standard mean and covariance 
\kl{for a single observation realization}
using the 
respective ensemble Kalman filtering methods, and measure the approximation error in terms of 
the root mean square error (RMSE):
\hh{\begin{equation}\label{eq:errorMeasure}
\sqrt{ \sum_{n=1}^N \frac{|\widehat{m}^\dagger_n - \widetilde{m}_n|^2}{N} }, \qquad 
\sqrt{\sum_{n=1}^N \frac{|\widehat{C}^\dagger_n - \widetilde{C}_n|^2}{N}},
\end{equation}}
with $(\widetilde{m}_n, \widetilde{C}_n)$ denoting \kl{a single
  realization of} either the EnKF or the MLEnKF updates approximating
the gold standard moments. These observables are sufficiently smooth
to reach the rates $\alpha=1$ and $\beta = 2$ with the Milstein
method, cf.~\cite{GrahamTalay}. \hhLast{The respective decay rates are numerically 
verified over a sequence of times in Figure~\ref{fig:levelsEx1}.} Figure~\ref{fig:fig1Ex1} presents a
numerical performance study measuring RMSE~\eqref{eq:errorMeasure}
vs.~computational cost for the respective methods.  As is to be
expected from Theorem~\ref{thm:main} the decay of RMSE for the MLEnKF
method as a function of the cost $J$ is roughly $\cO(J^{-1/2})$,
orders of magnitude faster than the observed and expected EnKF decay
rate $\cO(J^{-1/3})$. \hh{%and we further 
Note that the error growth
  factor $\abs{\log(\varepsilon)}^n$ from the theoretical
  bound~\eqref{eq:lperror} is not visible in the experiments.  In fact, 
  the constant is even stable (the shift in cost as measured by runtime is simply due to 
  computation of additional updates), indicating that with a more careful analysis 
  the present results may be extended to an infinite time horizon. }

\begin{figure}[htbp]
  \centering
  \includegraphics[width=0.95\textwidth]{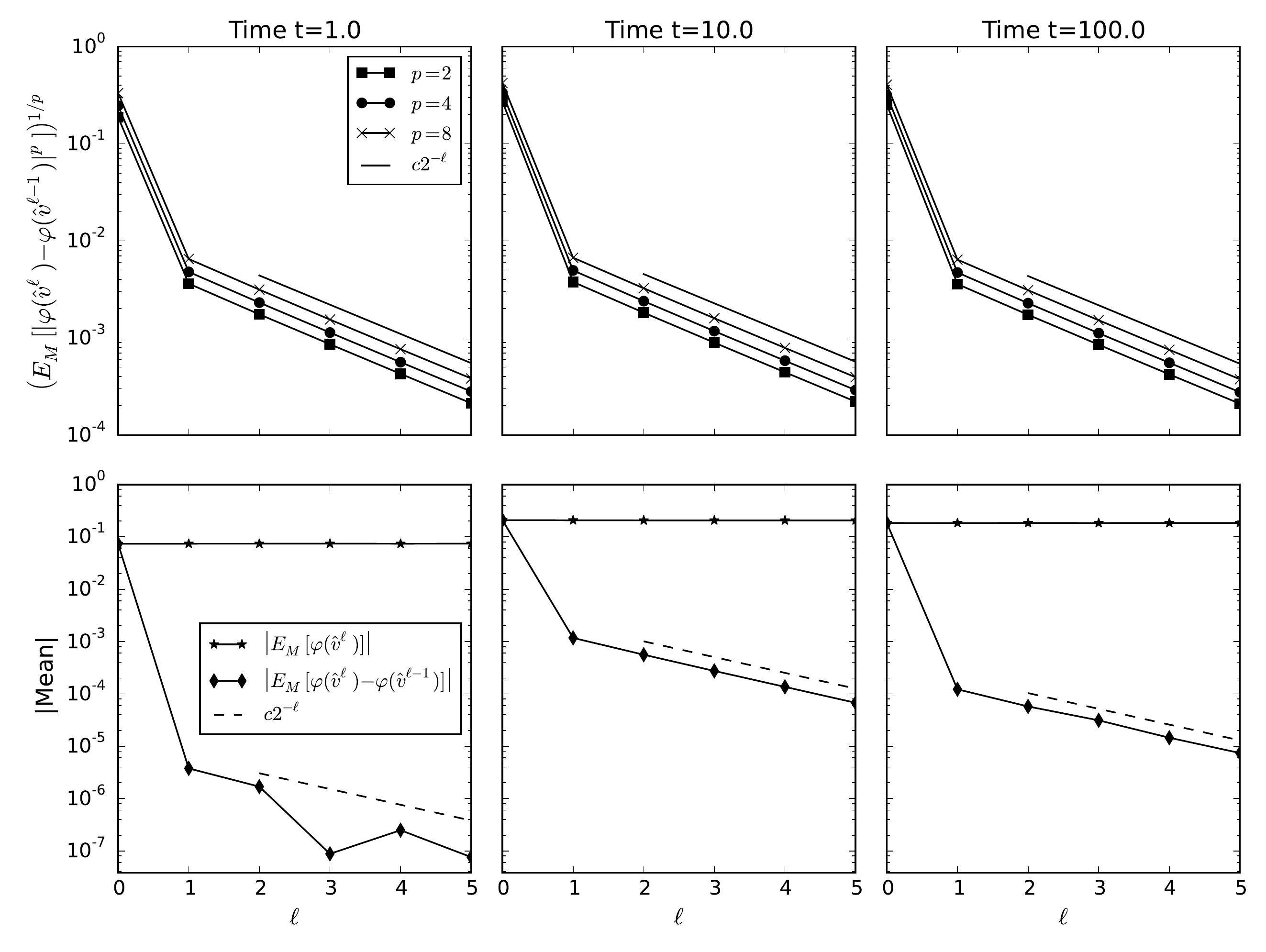}
   \caption{Numerical estimates of the decay rates over a sequence of times 
     for the problem presented in Section~\ref{ssec:ou} with $\varphi(v) = v$.
     The computations use $M=10^6$ particles on every level.}
  \label{fig:levelsEx1}
\end{figure}

\begin{figure}[htbp]
  \centering
  \includegraphics[width=1\textwidth]{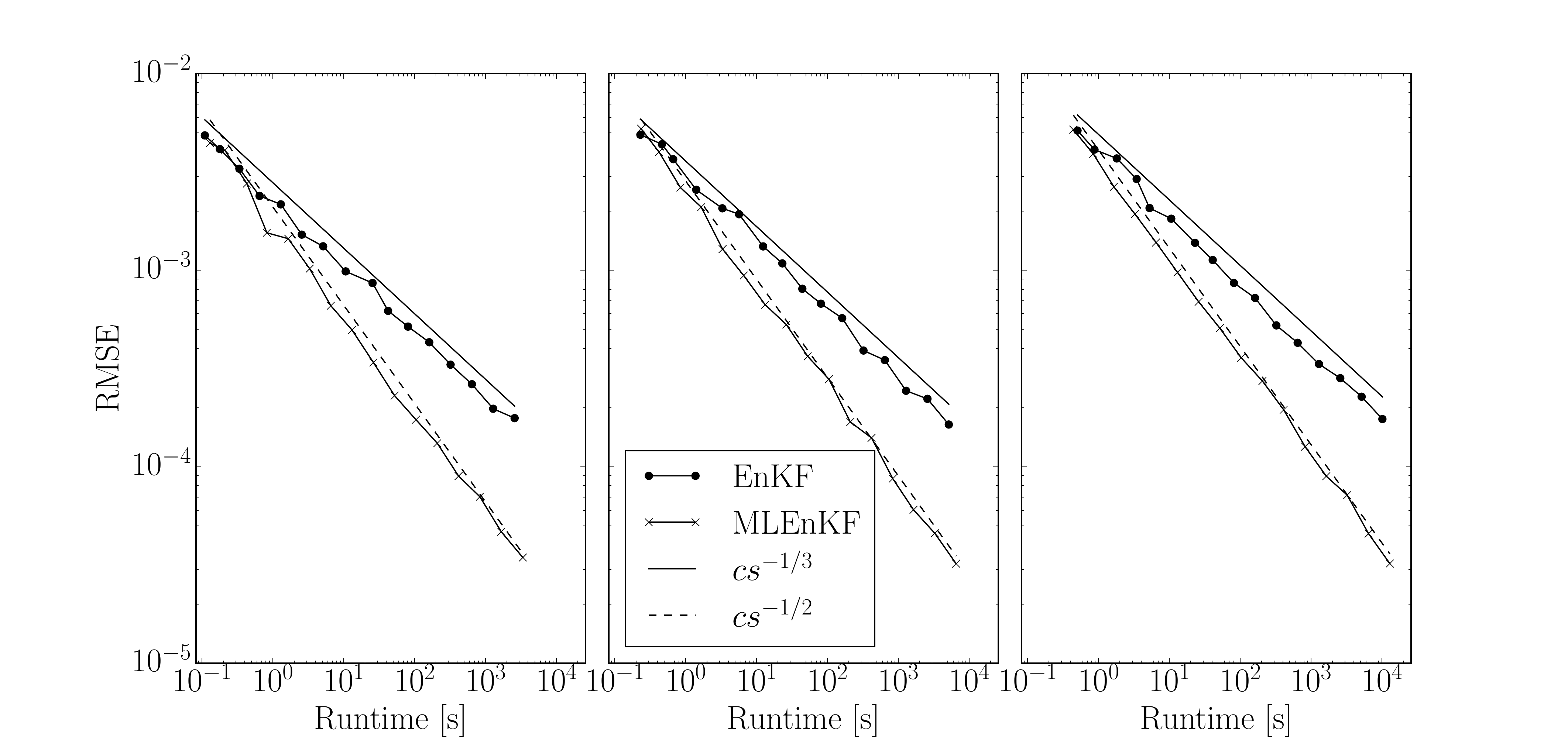}
   \includegraphics[width=1\textwidth]{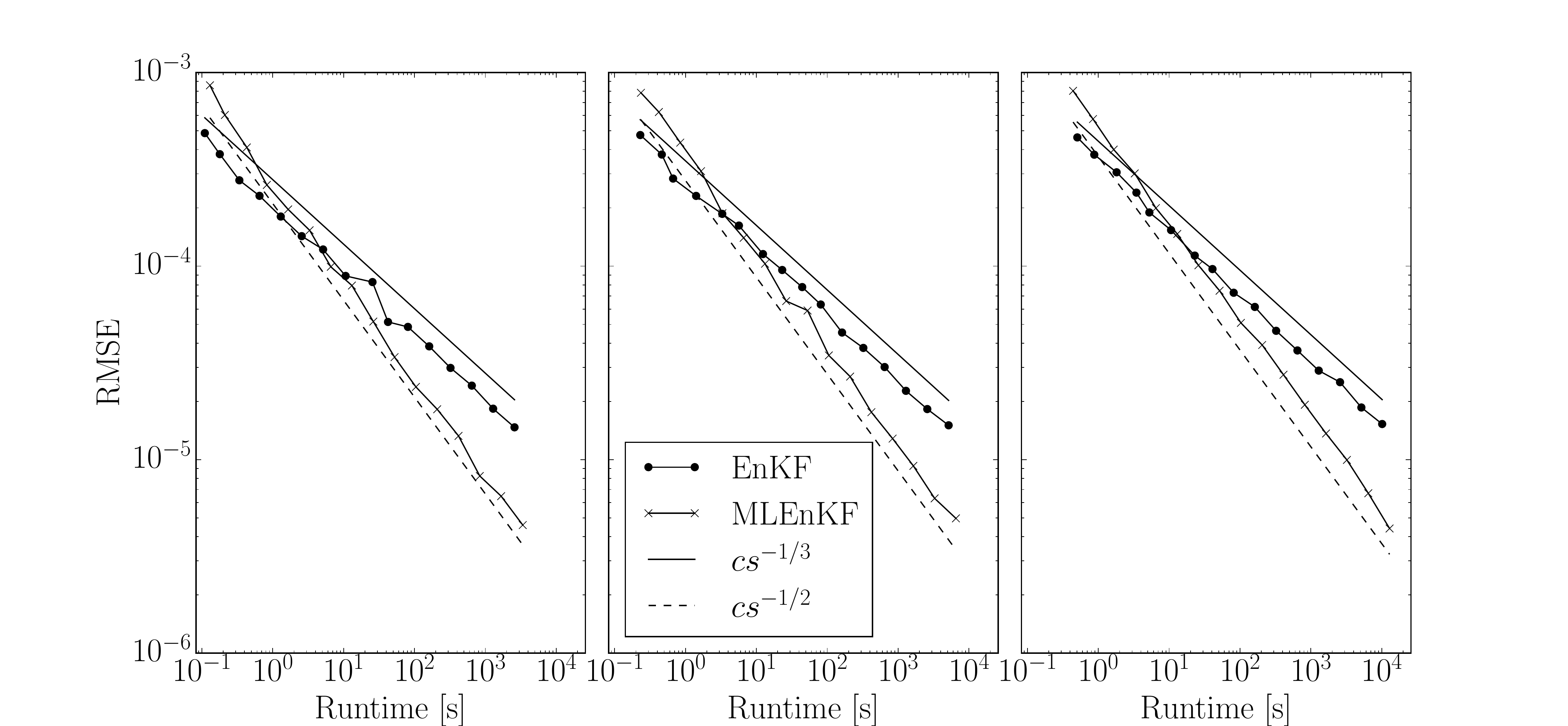}
  \caption{Comparison of the accuracy vs.~computational cost when using
  the EnKF and MLEnKF methods on the filtering problem presented in Section~\ref{ssec:ou}.
    \hh{The error is measured in terms of the RMSE~\eqref{eq:errorMeasure} for the 
  mean (top row) and covariance (bottom row), computed with $N=100, 200$ and
  $400$ observation times in the first, second and third column, respectively.
  The computational cost is measured in computer runtime.}}
  \label{fig:fig1Ex1}
\end{figure}

\subsubsection*{Approximations of the excedence probability}
In our second numerical test, we approximate the mean of the
observable $\varphi(\widehat{u}_n) :=
\mathbf{1}\{\widehat{u}_n>0.1\}$, which corresponds to the excedence
probability $\bbP(\widehat{u}_n>0.1) =
1-\Phi((0.1-\widehat{m}_n^\dagger)/\sqrt{\widehat{C}_n^\dagger})$.
The Milstein method achieves the weak rate $\alpha =1$, but while one
may show for $p=2$ and any $\delta >0$, $\| \varphi(\psiL(v)) -
\varphi(\Psi^{\ell-1}(v)) \|_p \lesssim N_\ell^{(1-\delta)/2}$,
cf.~\cite{GilesMCQMC06, Avikainen09}, the low regularity of the
observable implies that there does not exist a $\beta >0$ fulfilling
condition (ii) of Assumption~\ref{ass:mlrates} for all $p >
2$. \hhLast{A numerical inference of $\beta=0$ can be made from the
  numerical estimates of the decay rates in
  Figure~\ref{fig:levelsExProb}, where we see that the decay rate of
  $\|\varphi(\psiL(v)) - \varphi(\Psi^{\ell-1}(v))\|_p$ consistently
  decreases towards $0$ as $p$ increases over a sequence of times
  (while $\alpha \approx 1$).} Theorem~\ref{thm:main} does therefore
not cover the given approximation problem. Nonetheless, implementing
with the rates $\beta =1$ and $\alpha=1$, a numerical comparison of
the performance of EnKF and MLEnKF approximating the excedence
probability is presented in Figure~\ref{fig:fig1Ex2}. A near optimal
RMSE decay rate, slightly slower than $\cO(J^{-1/2})$, is again
achieved for the MLEnKF method.

\begin{figure}[htbp]
  \centering
  \includegraphics[width=0.95\textwidth]{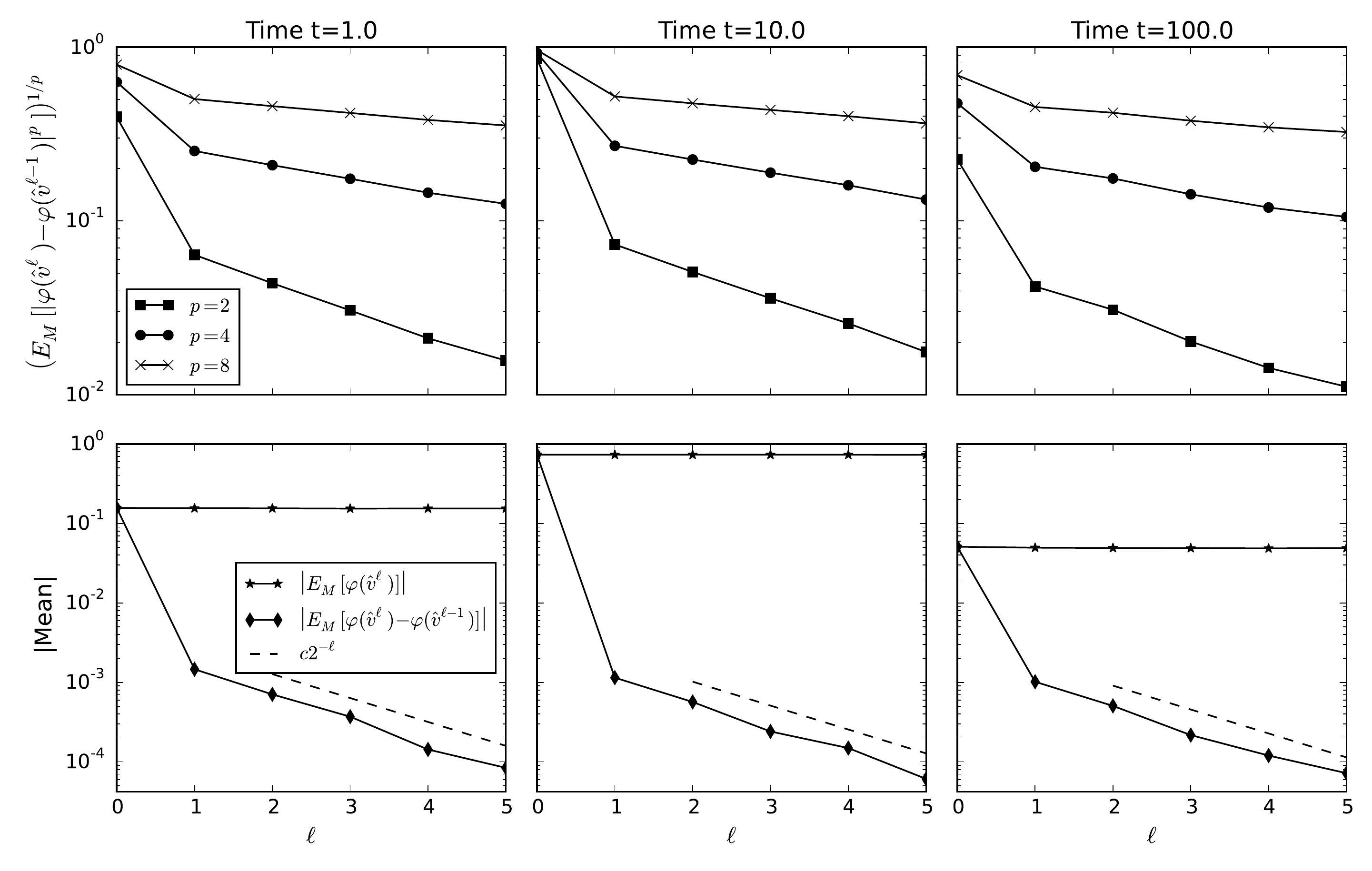}
  \caption{Numerical estimates of the decay rates over a sequence of
    times for the problem presented in Section~\ref{ssec:ou} when
    approximating the excedence probability $\bbP(\widehat{u}_n>0)$,
    i.e., with $\varphi(v) = \mathbf{1}\{v>0.1\}$.
    The computations use $M=10^6$ particles on every level.}
  \label{fig:levelsExProb}
\end{figure}

\begin{figure}[htbp]
  \centering
 \includegraphics[width=0.6\textwidth]{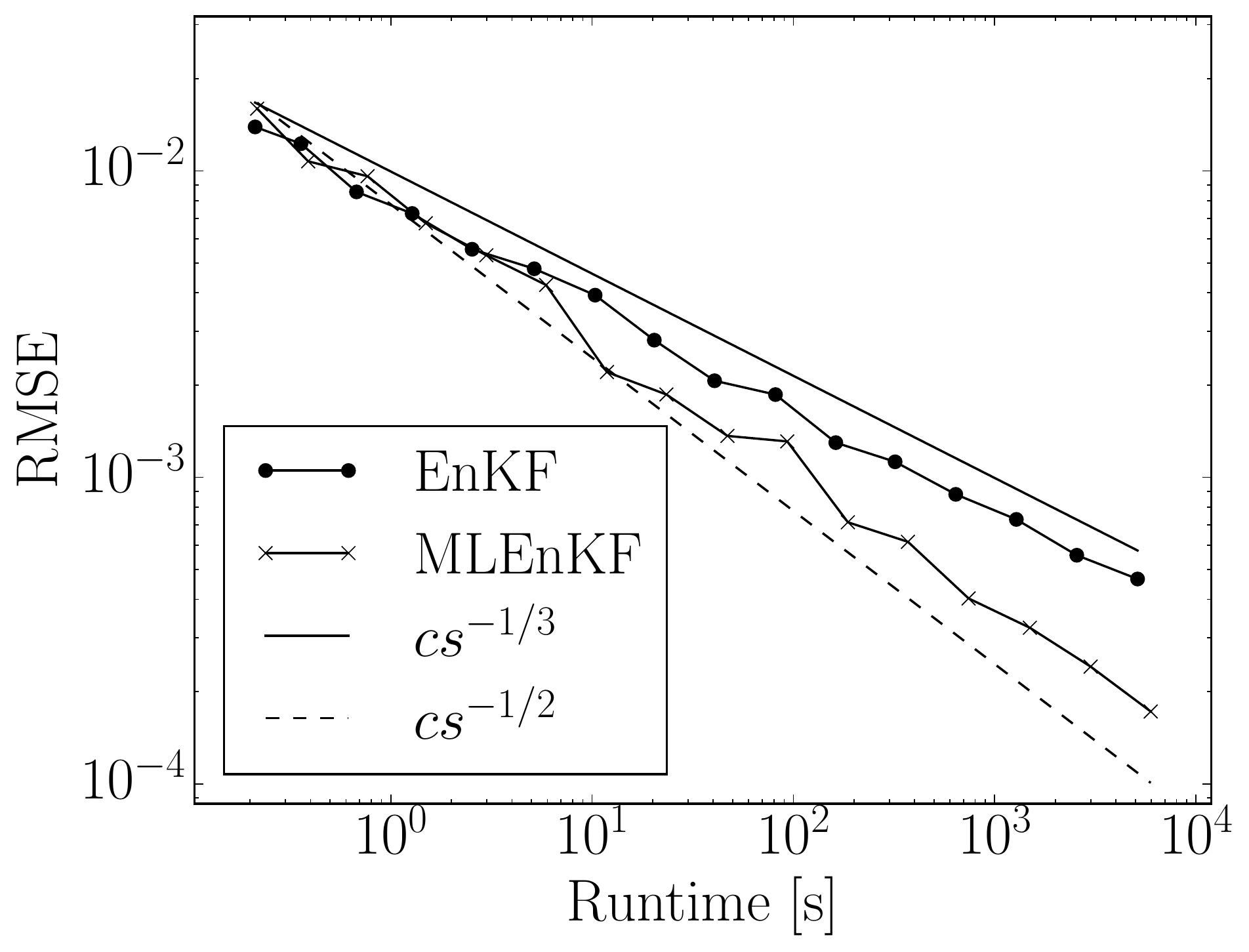}
  \caption{Accuracy vs.~computational cost comparison of the EnKF and
    MLEnKF methods on the filtering problem presented in
    Section~\ref{ssec:ou} when approximating the excedence probability
    $\bbP(\widehat{u}_n>0.1)$ \kl{over $N=200$ observation times.}  The
    error is measured in terms of the RMSE and the computational cost
    in computer runtime.}
  \label{fig:fig2Ex1}
\end{figure}

\subsection{Drift-alternating Geometric Brownian Motion}
\label{ssec:gbm}

We next consider the SDE
\begin{equation}\label{eq:gbm}
du(t+n) = \begin{cases} \sigma^2 u(t+n) dt + \sigma u(t+n)
  dW(t+n),& \text{if } n \text{ is even},\\
 \sigma u(t+n) dW(t+n), & \text{else,}
\end{cases}
\quad \text{for } t \in (0,1),
\end{equation}
and with the initial condition $u(0) = 1$.
This equation is analytically tractable as well, and the solution
of the transformed equation $z = \log u$ is given via It{\^o}'s formula
by
\[
dz(t+n) = (-1)^n\frac{\sigma^2}{2} dt + \sigma dW(t+n). 
\]
Defining $\xi_n \sim N(0,\sigma^2)$ i.i.d., one has that 
\[
z_{n+1} = z_n +  (-1)^n\frac{\sigma^2}{2} + \xi_n =: \Psi_n(z_n), \quad {\rm with} \quad  z_0 = \log u_0 = 0, 
\]
and the solution of~\eqref{eq:gbm} can be obtained via exponentiation: $u_n = e^{z_n}$.
Moreover, noisy observations for $u_n$ are introduced on the form  
\[
\tilde{y}_n = u_n e^{\eta_n},
\]
and $\eta_n \sim N(0,\Gamma)$ i.i.d.
Which, upon defining ${y_n^\obs} = \log \tilde{y}_n$, yields the following relation to 
noisy observations of $z_n$:
\[
{y_n^\obs} = z_n + \eta_n.
\]
As the SDE~\eqref{eq:gbm} does not fulfill the linear Gaussian constraints~\eqref{eq:linnonaut}
but $z = \log u$ does, we will here update the ensemble of $z = \log
u$ processes. However, to add some artificial difficulty to the problem, the 
numerical integration is done on the $u$ ensemble:   
\begin{enumerate}

 \item[(i)] Numerically integrate a (multilevel or single level) ensemble $u_{n-1} \to u_{n}$.
 
 \item[(ii)] Compute sample mean and covariance of ${z_n|Y_{n-1}^\obs}$ using the $z_{n} = \log u_n$ ensemble.
 
 \item[(iii)] Update the ensemble $z_n$ by the new information provided by the observation ${y_n^\obs}$.
 
 \item[(iv)] Compute the initial condition for the ensemble $u_n = e^{z_n}$ and return to (i).

\end{enumerate}

\begin{remark}
The numerical integration of the GBM process in step (i) above
introduces an artificial difficulty in the filtering problem
since the integration may by other means be solved exactly. In practice, this does of
course not make sense, but our purpose here is simply to numerically
validate the performance of the MLEnKF method on a set of 
simple filtering problems for which reference solutions exist.
\end{remark}

Numerical integration of $u_n$ is done by the hierarchy 
of Euler--Maruyama schemes introduced in~\eqref{eq:eulerMaruyama}
(applied to the GBM problem, the schemes are Euler--Maruayama, while 
applied to problems with additive noise, the schemes are Milstein), 
here with the slightly finer mesh hierarchy $\Dt{\ell} =
2^{-3+\ell}$, since the problem less stable. The covariance parameters
are set to $\sigma = 1/4$ and \kl{$\Gamma = 1/16$, $N=200$}
and the numerical method yields the rates $\alpha=1$, $\beta =1$ (and $\gamma=1$).
\hhLast{See Figure~\ref{fig:altLevels} for a numerical verfication of these decay rates over 
a sequence of times.}
In Figure~\ref{fig:fig1Ex2}, the gold standard mean and covariance of
$z_n$ has been approximated by the filtering methods. We observe
an RMSE decay rate slightly slower than 
$\cO(s^{-1/2})$ for MLEnKF and $\cO(s^{-1/3})$ for EnKF, where $s$ denotes runtime in seconds.

\begin{figure}[htbp]
  \centering
  \includegraphics[width=.95\textwidth]{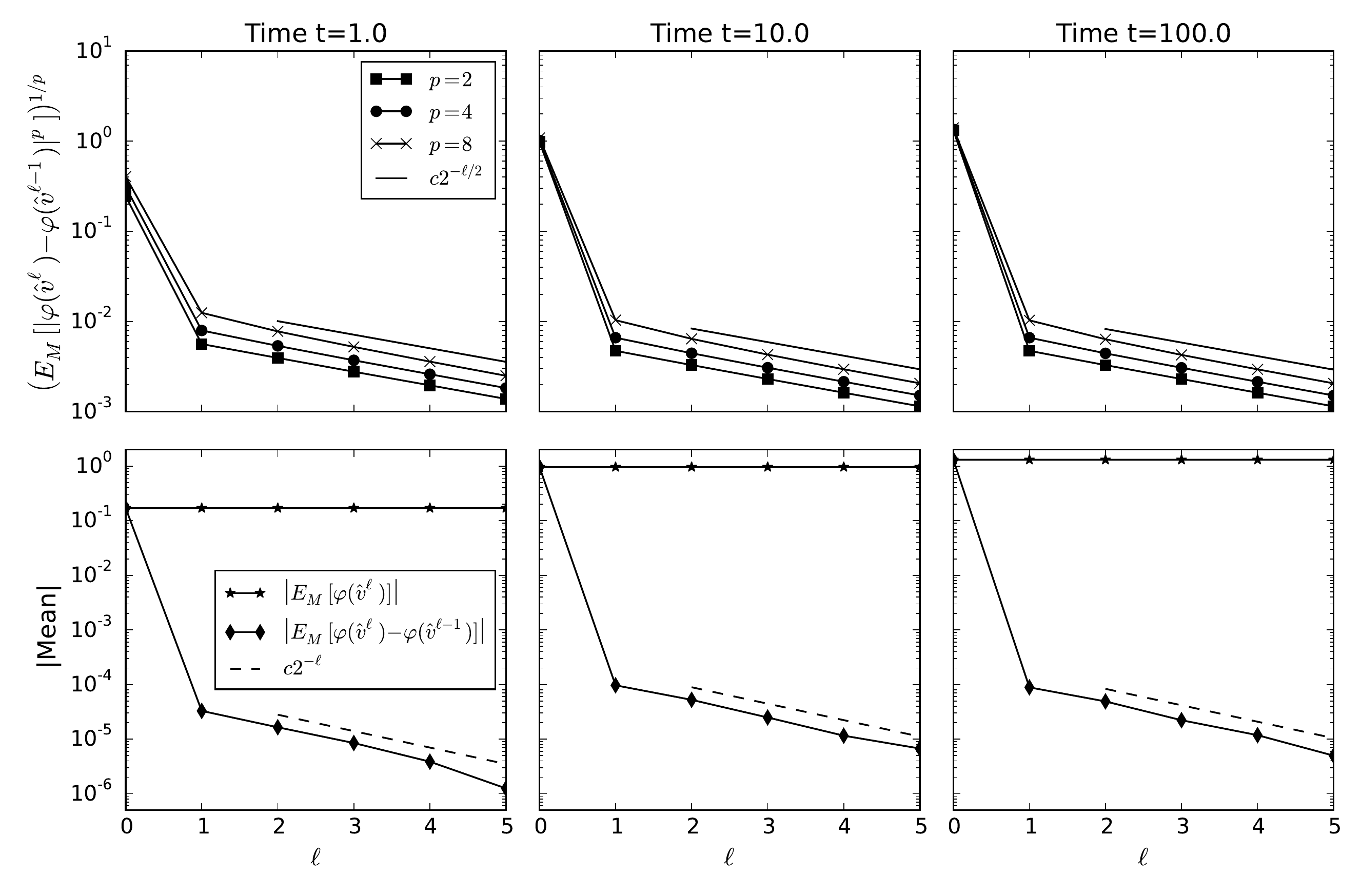}
  \caption{Numerical estimates of the decay rates over a sequence of
    times for the problem presented in Section~\ref{ssec:gbm} with
    $\varphi(v) = v$.  The computations use $M=10^6$ particles on
    every level.}
  \label{fig:altLevels}
\end{figure}

\begin{figure}[htbp]
  \centering
\includegraphics[width=0.47\textwidth]{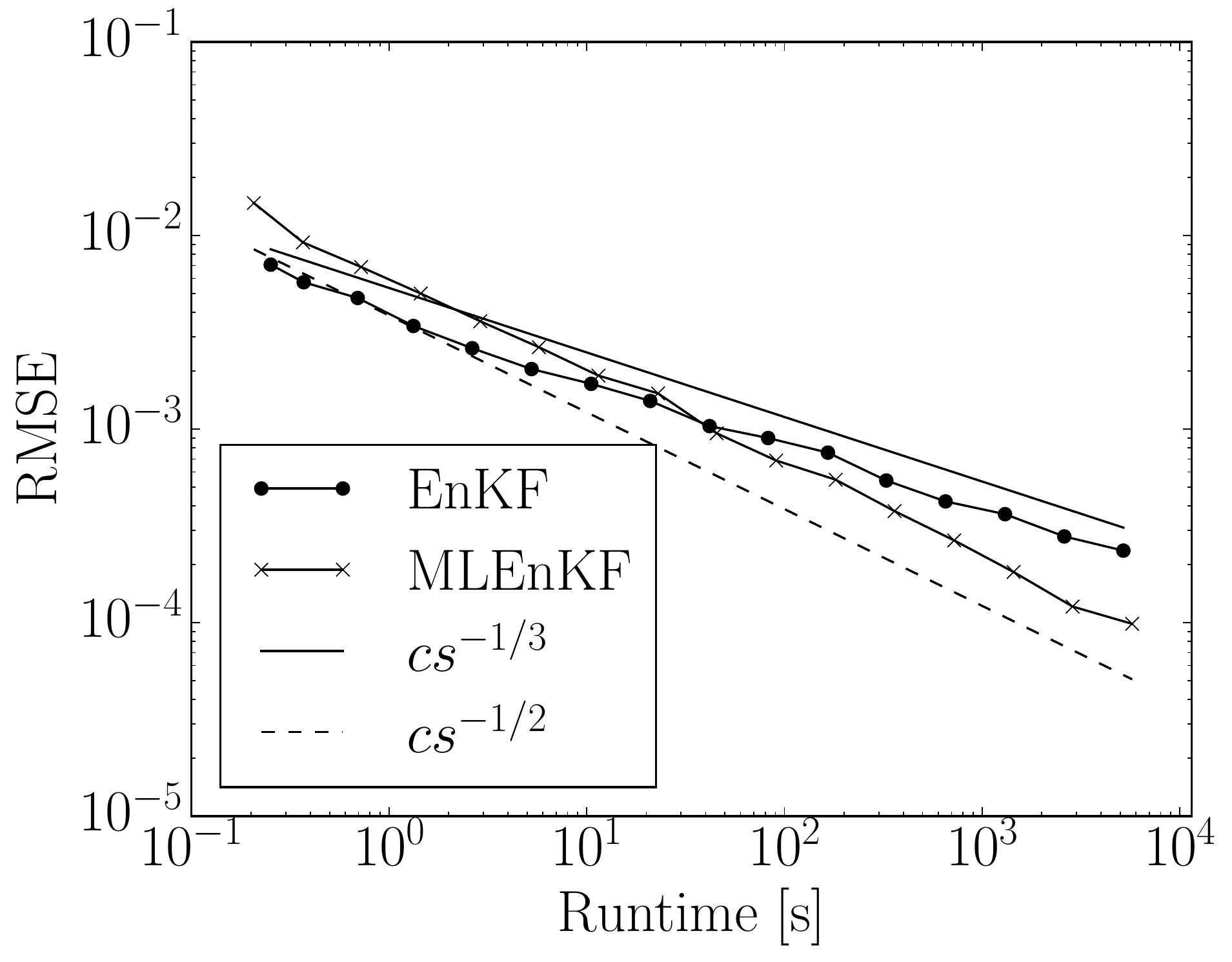}%figures/rmseMeanCostJ.pdf}
\includegraphics[width=0.47\textwidth]{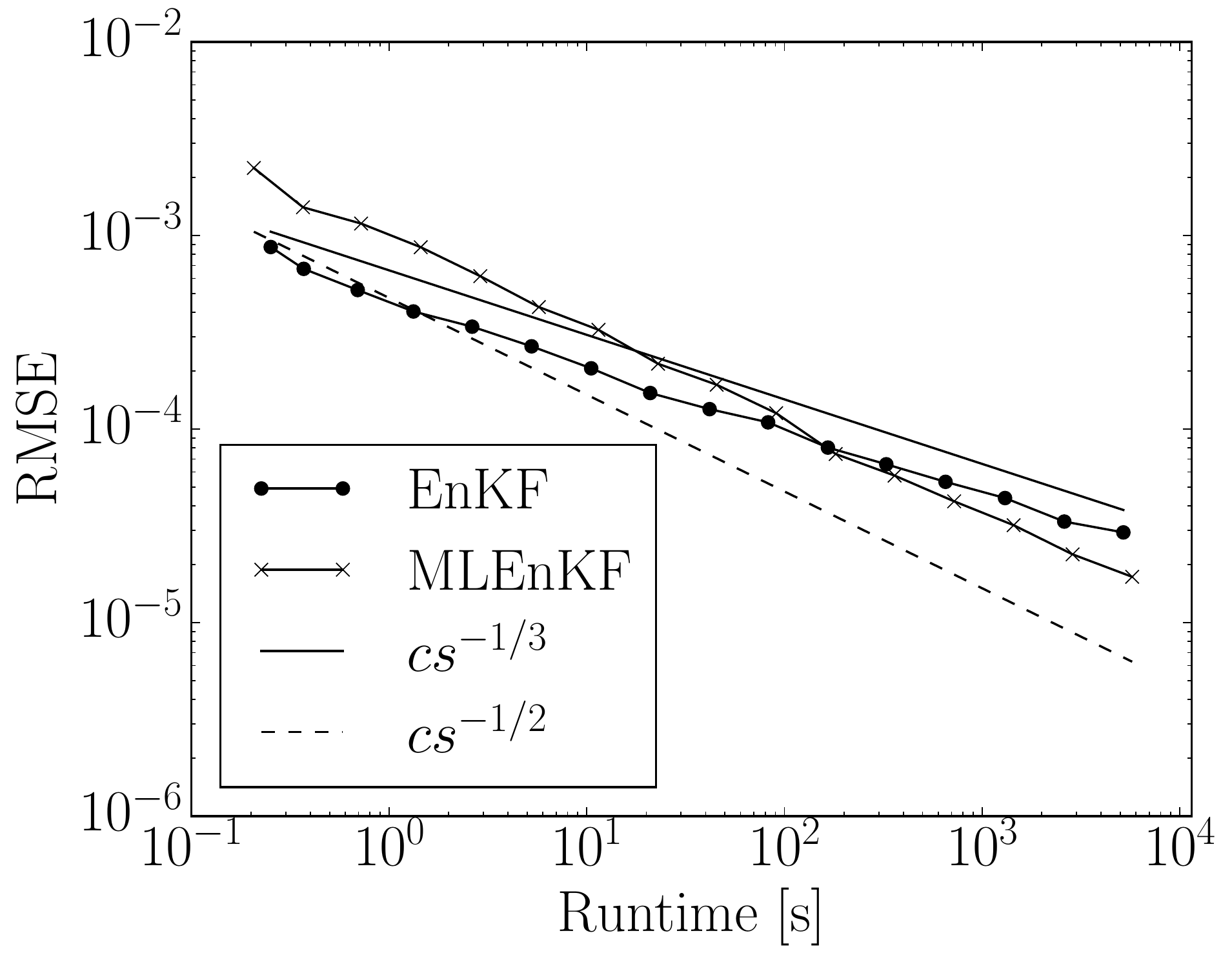}
  \caption{Accuracy vs.~computational cost comparison of
  the EnKF and MLEnKF methods on the filtering problem presented in Section~\ref{ssec:gbm}.
  The error is measured in terms of the RMSE~\eqref{eq:errorMeasure} for the 
  mean (left plot) and covariance (right plot),
  and the computational cost is measured in computer runtime.}
  \label{fig:fig1Ex2}
\end{figure}

\section{Conclusion}
\label{sec:conclusion}

A first attempt, to the knowledge of the authors, at filtering using a multilevel Monte Carlo approach 
is considered in the present work.  A proof based on induction of the optimality of the cost as a function of the error, or 
equivalently the error as a function of the  cost, is given.  
This shows that an optimality result, which is slightly penalized with respect to the vanilla Monte Carlo result, can extend to the case of sequential inference.
There is either a logarithmic term which grows with the number of steps, or a slightly higher rate $\cO(\varepsilon^{-2-\delta})$, for $\delta>0$. 
The ensemble Kalman filter is considered, which is consistent only in the case of a linear Gaussian model.  
However, the mean-field limiting equation may be viewed as a one-step optimal linear 
(in the observation) filter, and the convergence to this limiting distribution has the desired rate for a more general class of models.  
\rev{Since this work was submitted, two papers have appeared which consider consistent nonlinear filters for similar models.  
The paper \cite{jasra2015multilevel} considered multilevel particle filters with optimally coupled multinomial resampling 
and found the rate of strong convergence 
$\beta$ is effectively reduced by a factor of 2 as a result of the resampling.  
The paper \cite{gregory2015multilevel} considered multilevel ensemble transform particle filters, which 
use an optimally coupled deterministic transformation in place of the standard random resampling mechanism, 
and numerical results indicated the rate may be reduced in some cases and the same in others.}

{\bf Acknowledgements } Research reported in this publication was supported by the King Abdullah University of Science and Technology (KAUST).  HH, KJHL, and RT were members of the SRI Center for Uncertainty Quantification at KAUST for much of the research reported.  
KJHL was additionally supported by an ORNL LDRD Strategic Hire grant.

\bibliography{mybib}
\bibliographystyle{siam}%plain}

\end{document}